\documentclass[a4paper,11pt,reqno]{amsart}

\usepackage[latin1]{inputenc}
\usepackage{graphicx}
\usepackage{amsmath}
\usepackage{mathrsfs}

\DeclareFontFamily{U}{calligra}{}
\DeclareFontShape{U}{calligra}{m}{n}{<->callig15}{}



\newcommand{\llangle}{\hbox{$\langle\kern-1.5pt\langle$}}
\newcommand{\rrangle}{\hbox{$\rangle\kern-1.5pt\rangle$}}

\usepackage{amsmath,stmaryrd, amsthm,latexsym,amscd,amssymb,MnSymbol} 
\usepackage{graphicx}

\usepackage[all]{xy}
\usepackage{a4}
\usepackage{hyperref}
\usepackage{url}
\usepackage{fancyhdr}
\usepackage{amscd}
\usepackage[usenames,dvipsnames]{xcolor}
\numberwithin{equation}{section}

\addtolength{\textheight}{+0.6cm} \addtolength{\voffset}{-0.5cm}
\addtolength{\headsep}{+0.6cm}
\addtolength{\textwidth}{+2.2cm}
\addtolength{\marginparwidth}{-50pt}
\addtolength{\hoffset}{-20pt}


%
\newtheorem{prop}{Proposition}[section]
\newtheorem{lemma}[prop]{Lemma}
\newtheorem{theorem}[prop]{Theorem}

\newtheorem{conjecture}[prop]{``Conjecture''}
\theoremstyle{definition}
\newtheorem{definition}[prop]{Definition}

\theoremstyle{remark}
\newtheorem{claim}[prop]{Claim}
\newtheorem{remark}[prop]{Remark}
\newtheorem{remarks}[prop]{Remarks}

\def\id{\mathop{\rm id}}
\def\tr{\mathop{\rm tr}}
\def\phi{\varphi}

\def\G{\Gamma}

\def\R{{\mathbin{{\mathbb {R}}}}}
\def\Z{\mathbin{\mathbb {Z}}}
\def\C{\mathbb{C}}

\begin{document}

\title[The Baum--Connes conjecture localised at the unit of a discrete group]{The Baum--Connes conjecture localised at the unit element of a discrete group}
\author{Paolo Antonini}

%
\author{Sara Azzali}
%
%
\author{Georges Skandalis}

\thanks{{\it Key words:} assembly maps, Novikov conjecture, equivariant Kasparov theory, classifying spaces. \\
{\it  MSC2010 classification:} 19K35, 46L80 (primary), 58J22, 55R40 19D55 (secondary).
\smallskip
\smallskip
\newline Paolo Antonini was partially supported by
the grant H2020-MSCA-RISE-2015-691246-QUANTUM DYNAMICS;  Sara Azzali  acknowledges support from the DFG grant \emph{Secondary invariants for foliations} within the Priority Programme SPP 2026 \lq\lq Geometry at Infinity\rq\rq.}

\begin{abstract}
We construct a Baum--Connes assembly map localised at the unit element of a discrete group $\Gamma$. 
This morphism, called $\mu_\tau$, is defined in $KK$-theory with coefficients in $\mathbb{R}$ by means of the action of the idempotent $[\tau]\in KK_\R^\Gamma(\C,\C)$ canonically associated to the group trace of $\Gamma$. We show that the corresponding $\tau$-Baum--Connes conjecture is weaker then the classical one but still implies the strong Novikov conjecture. 
The right hand side of $\mu_\tau$ is functorial with respect to the group $\Gamma$.
\end{abstract}

\maketitle

\setcounter{tocdepth}{1}

\tableofcontents

\section{Introduction}
\label{sec:introduction}

The Baum--Connes conjecture \cite{BC} predicts an isomorphism between two abelian groups naturally associated with 
a discrete group $\Gamma$.
More precisely, an assembly map 
$$
\mu:K^{\textrm{top}}_*(\Gamma) \longrightarrow K_*(C^*_r\Gamma)
$$
is constructed and is conjectured to be an isomorphism. 
The left hand side is a topological object, based on the $K$-homology of proper $\Gamma$-spaces with compact quotient;  the right hand side is analytic, the $K$-theory of the reduced $C^*$-algebra of $\Gamma$.

We refer to the vast literature on the subject (see \cite{BC, BCH}, the book \cite{Va} and the recent articles \cite{BGW, BEW, GJV} and the references therein). 
One of the main motivations of the conjecture is that the injectivity of $\mu$ implies the Novikov conjecture.

In fact, the Novikov conjecture is implied by the rational injectivity of the Mishchenko--Kasparov assembly map \cite{Kas2,MishFo}
$$
\tilde \mu: K_*(B\Gamma) \longrightarrow K_*(C^*_r\Gamma)
$$
and the Baum--Connes map incorporates the Mishchenko--Kasparov assembly in the sense that $\tilde \mu=\mu\circ \sigma$, where $\sigma:K_*(B\Gamma)\to K^{\textrm{top}}_*(\Gamma)$ is a natural, rationally injective \lq\lq forgetful\rq\rq\ map. When $\Gamma$ is torsion free, one simply has $\mu=\tilde \mu$. In general, the left hand side $K^{\textrm{top}}_*(\Gamma)$  of the Baum--Connes map also contains information coming from finite order elements of $\Gamma$, \cite{BCH}.

The more general formulation of the Baum--Connes conjecture  is the one called  \lq
\lq with coefficients\rq\rq, which involves the action of a discrete group $\Gamma$ on a $C^*$-algebra $A$. In this case, the conjecture about the bijectivity of the map
$$
\mu^A:K^{\textrm{top}}_*(\Gamma;A) \longrightarrow K(A\rtimes_r\Gamma)
$$
is known to admit counterexamples \cite{HLS}.

\bigskip In the present paper we apply the model of $KK$-theory with real coefficients developed in \cite{AAS2} to study the properties of the assembly maps. We explain how  adding coefficients in $\R$ provides a way to localise at the unit element of $\Gamma$. 
The upshot is a localised form of the Baum--Connes conjecture 
that we relate to the classical Baum--Connes and Novikov conjectures.

In \cite{AAS2} we identified a distinguished idempotent element  $[\tau]$ of the commutative ring $KK_\R^\Gamma(\C,\C)$ which is canonically associated to the group $\Gamma$ via its group trace.
If the group $\Gamma$ acts freely and properly on a locally compact space $Y$, then  $[\tau]$ acts as the identity element on the $\Gamma$-algebra of functions $C_0(Y)$, \textit{i.e}.\ it defines the unit element of the ring $KK_{\R}^\Gamma(C_0(Y),C_0(Y))$.

Via the action of the idempotent $[\tau]$  
we define 
for $\Gamma$-algebras $A$, $B$ the $\tau$-part of $KK^\Gamma_\R(A, B)$ to be the subgroup
$$
 KK^\Gamma_\R(A, B)_{\tau}:=\{x\otimes [\tau]; \ x\in KK^\Gamma_\R(A, B)\}=\{x\in KK^\Gamma_\R(A, B)\;: x\otimes [\tau]=x\}
$$
on which $[\tau]$ acts as the identity.  

\medskip The map $\mu_\tau$ that we construct is defined on $KK$-theory with real coefficients and relates the $\tau$-parts of the left and right hand side of the Baum--Connes assembly map.

\begin{itemize}
\item The left hand side of the $\tau$-assembly morphism will be  $K^{\textrm{top}}_{*, \R}(\Gamma;A)_{\tau}$. 

\item For the right hand side, it is natural to define the $\tau$-part of $K_\R(A\rtimes_r \G)$ letting $[\tau]$ act via descent \textit{i.e}.\ by right multiplication with the idempotent element $J^\Gamma_r(1_A\otimes [\tau])$ of the ring $KK_\R(A\rtimes_r \G, A\rtimes_r \G)$. We thus set
$$
K_{*,\R}(A\rtimes_r \G)_{\tau}:=\{\xi\in K_{*,\R}(A\rtimes_r \G):\, \xi\otimes J^\Gamma_r(1_A\otimes [\tau])=\xi\}.
$$
\item The assembly morphism defines a map 
$$
\mu_\tau: K^{\textrm{top}}_{*, \R}(\Gamma;A)_{\tau}\longrightarrow K_\R(A\rtimes_r \G)_{\tau} \ .
$$

\end{itemize}
The $\tau$-form of the Baum--Connes conjecture with coefficients in a $\Gamma$-algebra $A$ is then the statement that $\mu_\tau$ is an isomorphism. 

\bigskip We show that the $\tau$-form of the Baum--Connes conjecture is weaker than the Baum--Connes conjecture, in the sense that,  if the Baum--Connes assembly map is injective (\emph{resp}.\ surjective) for $A\otimes N$ for every $\rm{II}_1$-factor $N$ then $\mu_\tau$ is injective (\emph{resp}.\ surjective) for $A$ (Theorem \ref{thm:BCImpliesBCtau}).

\bigskip Moreover, the  $\tau$-form of Baum--Connes is \lq\lq closer\rq\rq{} to the Novikov conjecture, since it is only concerned with the part of $K_*^{top}(\Gamma)$ corresponding to free and proper actions. We show 

\bigskip
\noindent\textbf{Theorem \ref{theorem:inj-novikov}.} \emph {If $\mu_\tau:  K^{\textrm{top}}_{*, \R}(\Gamma)_{\tau}\longrightarrow K_\R(C^*_r \G)_{\tau}$ is injective, then the Mishchenko--Kasparov assembly map $K_*(B\Gamma)\to K_*(C_r^*\Gamma)$ is rationally injective. } \\

To do so, exploiting the properties of $KK$-theory with real coefficients, we prove (Theorem \ref{th:sigmat}) that the $\tau$-part of $K^{\textrm{top}}_{*, \R}(\Gamma)$ identifies with $K_{*,\mathbb R}(B\Gamma)$ and, more generally,  the $\tau$-part of $K^{\textrm{top}}_{*, \R}(\Gamma;A)$ identifies with $K^\Gamma_{*,\mathbb R}(E\Gamma; A)$.

We already know from \cite{AAS2} that the idempotent $[\tau]$ acts as the identity on $K^\Gamma_{*,\mathbb R}(E\Gamma; A)$.

To show that  the $\tau$-part of $K^{\textrm{top}}_{*, \R}(\Gamma;A)$ is contained in  $K^\Gamma_{*,\mathbb R}(E\Gamma; A)$, we construct a compact $\Gamma$-space $X$ with  an invariant probability measure on which each torsion subgroup of $\Gamma$ acts freely.
In particular, this provides an inverse $t$ to the map $\sigma: K_{*,\mathbb R}(B\Gamma)\to K^{\textrm{top}}_{*, \R}(\Gamma)_\tau $ on the $\tau$-part.

The relation between the localised Baum--Connes assembly map  $\mu_\tau$ and the Mishchenko--Kasparov assembly $\tilde \mu$  is summarised by the following commutative diagram (Remark \ref{diagram}):
$$
	\xymatrix{&K^{\textrm{top}}_{*,\mathbb R}(\Gamma)_{\tau}\ar[r]^{\mu_\tau\;}\ar@{-->}@/^1.2pc/[dl]^t &K_{*,\mathbb R}(C_r^*\Gamma)_\tau\ar@{^{(}->}[d] \\
		K_{*,\mathbb R}( B\Gamma)\ar[ur]^{\sigma_*} 
&& K_{*,\mathbb R}(C_r^*\Gamma)\\
		K_{*}( B\Gamma)\otimes \mathbb R \ar[u]^{\cong}\ar[rr]_{\tilde \mu\otimes 1} && K_{*}(C_r^*\Gamma)\otimes \mathbb R\ar[u]_\beta
	}
$$

This also provides informations about the image of the assembly map $\tilde \mu\otimes 1$. More precisely, applying the natural map $\beta$, the image of $\tilde \mu\otimes 1$ is in the $\tau$-part of $K_{*,\R}(C_r^*\Gamma)$.

\bigskip

Let us also mention some other potentially important features of our construction. First, the localised map  $\mu_\tau$ does not distinguish between full and reduced group $C^*$-algebra: in fact we observe (Proposition \ref{prop:redmax}) that 
$$
K_{*,\R}(C^*\Gamma)_\tau\simeq K_{*,\R}(C_r^*\Gamma)_\tau\ .
$$
The second feature is related to functoriality.
It is known that the reduced $C^*$-algebra of groups is not functorial with respect to (non injective) morphisms $\Gamma_1\to \Gamma_2$ of countable groups, unlike the left hand side of $K_*^{\rm top}(\Gamma)$. On the other hand, we show that the group $K_{*,\R}(C^*\Gamma)_\tau$ is  functorial (see Section \ref{naturalitytau}).

\medskip 
From the above properties, arguing that 
the Baum--Connes conjecture holds for hyperbolic groups (\cite{LafforgueHyperbolic}), one could hope to be able to establish bijectivity of $\mu_\tau$ in a great generality. 

\medskip For the case with coefficients in a $\Gamma$-algebra this is unfortunately not the case: in Section \ref{sec:counterex} we show that the construction of  \cite{HLS} for group actions using the ``Gromov monster'' still provides counterexamples to the bijectivity of the localised Baum--Connes assembly $\mu_\tau:  K^{\textrm{top}}_{*, \R}(\Gamma;A)_{\tau}\longrightarrow K_\R(A\rtimes_r \G)_{\tau} $ for a suitable choice of coefficents $A$. Again the failure of exactness is the source of  counterexamples. 
 In fact, using the properties of a Gromov monster group $\Gamma$ (\cite{AD, Gr}), and letting $A:=\ell^{\infty}(\mathbb{N};c_0(\Gamma))$ we show that the sequence of the $\tau$-parts 
$$	 \xymatrix{ K_{0,\R}(c_0(\mathbb{N}\times \Gamma) \rtimes_r \Gamma )_{\tau}\ar[r]&  K_{0,\R}(A \rtimes_r \Gamma \ar[r])_{\tau}&   K_{0,\R}\big{(} \big{(}A/c_0(\mathbb{N}\times \Gamma) \big{)}\rtimes_r \Gamma    \big{)}_{\tau} }	 
 $$
is not exact in the middle. This proves that $\mu_\tau$ cannot be an isomorphism for every $\Gamma$-algebra. 

\bigskip
\textbf{On the terminology.} Let's say in what sense the element $\tau $ \emph{localises at the unit element.} \begin{itemize}
\item Naively, $\tau$ really takes the value of a function at the unit element.
\item In a more sophisticated sense, we know from \cite{BCH} that the left hand side $K^{\textrm{top}}_{*}(\Gamma)$ rationally breaks into contributions of the various conjugacy classes of $\Gamma$. In the same way, it is known from \cite{Bur} that the cyclic cohomology of the group algebra $\C\Gamma$ breaks into contributions of the various conjugacy classes of the group; the one associated to the unit element is equal to the cohomology group $H^*(\Gamma;\C)=H^*(B\Gamma;\C)$. The idempotent $[\tau]$ isolates this component at the level of the ``right hand side'' $K_*(C_r^*\Gamma)$.
\end{itemize}

\bigskip \textbf{Notation.} All the tensor products considered here are minimal tensor products and will be just written with the sign ``$\otimes$''. \\
We will use the sign ``$\otimes_D$'' for the Kasparov product of $KK$-elements over a $C^*$-algebra $D$ in the sense of \cite{Kas1, Kas2}. When $x\in KK(A,D)$ and $y\in KK(D,B)$ we will sometimes drop the subscript $D$ and write $x\otimes y$ instead of $x\otimes_D y$.

Throughout the paper, when dealing with classifying spaces $E\G$ and $B\G$, we always use the notation $K_*(B\Gamma)$ and or $K_*^\Gamma(E\Gamma)$ to mean the $K$-homology with compact supports (or $\Gamma$-compact supports), which are often 
denoted by $RK_*$ and $RK_*^\Gamma$ in the literature.  

\bigskip \textbf{Acknowledgements.} The authors wish to thank the referee for many helpful suggestions which greatly improved the paper.

\section{$KK$-theory with real coefficients}
\label{sec:realKK}
In this section we briefly recall some constructions we will need later.
\subsection{Non-separable $C^*$-algebras and $K$-theory}
\label{separability} 
Let us recall some material from \cite{Sk2}. One shows that, when $A$ is a separable $C^*$-algebra, then for every $C^*$-algebra $B$, the group $KK(A,B)$  is the inductive limit over all the separable subalgebras of $B$. Based on this,  one constructs a new group $KK_{sep}(A,B)$ for non necessarily separable $A$ and $B$;  it is defined as the projective limit of the groups $KK(A_1,B)$ where $A_1$ runs over all the separable subalgebras $A_1 \subset A$. It enjoys two important properties:
\begin{itemize}
\item the familiar Kasparov group maps to this new $KK(A,B)$,
\item in the new $KK(A,B)$ the Kasparov product $$\otimes_D:KK_{sep}(A_1,B_1\otimes D) \times KK_{sep}(D\otimes A_2,B_2) \longrightarrow KK_{sep}(A_1\otimes A_2, B_1 \otimes B_2) $$
is always defined without separability assumptions. This follows from the naturality of the Kasparov product in the separable case.
\end{itemize}
One may note that all these facts can immediately be extended to the equivariant case - with respect to a second countable group $\Gamma$.
We have

\begin{lemma}
\label{lem:sep} Let $\,\Gamma$ be a countable discrete group,  and $B$ a $\Gamma$-algebra. 
\begin{enumerate}
\item For every separable $\Gamma $ algebra $A$, the group $KK^\Gamma(A,B)$ is the inductive limit of $KK^\Gamma(A,B_1)$ where $B_1$ runs over separable  $\Gamma$-subalgebras $B_1 \subset B$.
\item For every separable algebra $A$, the group $KK(A;B\rtimes_r \Gamma)$ is the inductive limit of $KK(A,B_1\rtimes_r \Gamma)$ where $B_1$ runs over separable  $\Gamma$-subalgebras $B_1 \subset B$.
\end{enumerate}
\end{lemma}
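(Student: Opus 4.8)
The plan is to prove part (1) by $\Gamma$-equivariantly ``separabilising'' Kasparov cycles, exactly as in the non-equivariant case of \cite{Sk2}, and then to deduce part (2) from the continuity of $KK(A,-)$ recalled above. First I would record the only genuinely non-formal ingredient: the family $\mathcal S$ of separable $\Gamma$-invariant $C^*$-subalgebras of $B$, ordered by inclusion, is directed. Indeed, given $B_1,B_2\in\mathcal S$, the $C^*$-algebra generated by $\bigcup_{\g\in\Gamma}\g(B_1\cup B_2)$ is $\Gamma$-invariant and, since $\Gamma$ is countable, still separable; hence it lies in $\mathcal S$ and contains both. Consequently $\varinjlim_{B_1\in\mathcal S}KK^\Gamma(A,B_1)$ makes sense, and the inclusions $B_1\hookrightarrow B$ induce a natural homomorphism $\Theta\colon\varinjlim_{B_1\in\mathcal S}KK^\Gamma(A,B_1)\to KK^\Gamma(A,B)$; it remains to prove that $\Theta$ is bijective.

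For surjectivity: given an equivariant Kasparov cycle $(E,\phi,F)$ for $KK^\Gamma(A,B)$, note that, since $A$ is separable, $E$ may be taken countably generated over $B$, and, arguing exactly as in the non-equivariant case of \cite{Sk2}, one finds a separable subalgebra of $B$ carrying all of the data of the cycle. Enlarging it to its $\Gamma$-saturation (still separable because $\Gamma$ is countable) yields $B_1\in\mathcal S$ together with an equivariant Kasparov cycle $(E_1,\phi,F)$ for $KK^\Gamma(A,B_1)$ such that $E_1\otimes_{B_1}B\cong E$ compatibly with $\phi$, $F$, the grading and the $\Gamma$-action. Then $(E_1,\phi,F)$ maps to $(E,\phi,F)$ under $\Theta$.

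For injectivity: suppose $x\in KK^\Gamma(A,B_1)$ with $B_1\in\mathcal S$ maps to $0$ in $KK^\Gamma(A,B)$. A homotopy realising this vanishing is itself an equivariant Kasparov cycle for $KK^\Gamma(A,B\otimes C[0,1])$, and applying the same separabilisation to it produces $B_2\in\mathcal S$ with $B_1\subseteq B_2$ over which the homotopy is defined; hence $x$ already maps to $0$ in $KK^\Gamma(A,B_2)$, and so it is $0$ in the inductive limit. This proves (1). The step I expect to require the most care is precisely this equivariant separabilisation: one must check that the separable subalgebra of $B$ extracted from a cycle (or a homotopy) can be replaced by a $\Gamma$-invariant one without destroying separability, which is exactly where the countability of $\Gamma$ is used; the remainder is the routine bookkeeping already carried out in \cite{Sk2}.

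For (2), I would note that the reduced crossed product respects the directed union $\mathcal S$: the inclusions $B_1\rtimes_r\Gamma\hookrightarrow B\rtimes_r\Gamma$ ($B_1\in\mathcal S$) are isometric, as one sees from the compatible faithful conditional expectations onto $B_1$ and onto $B$, and $\bigcup_{B_1\in\mathcal S}B_1\rtimes_r\Gamma$ contains the dense subalgebra $C_c(\Gamma,B)$; hence $B\rtimes_r\Gamma=\varinjlim_{B_1\in\mathcal S}(B_1\rtimes_r\Gamma)$ as a $C^*$-algebra. Since $A$ is separable, $KK(A,-)$ commutes with this inductive limit by the result of \cite{Sk2} recalled above, which gives $KK(A,B\rtimes_r\Gamma)=\varinjlim_{B_1\in\mathcal S}KK(A,B_1\rtimes_r\Gamma)$. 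Alternatively one may run the cycle-by-cycle argument of (1) directly for the algebra $B\rtimes_r\Gamma$, extracting a $B_1\in\mathcal S$ with all the data of a given cycle already defined over $B_1\rtimes_r\Gamma$.
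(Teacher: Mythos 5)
Your part (1) is fine and is essentially the paper's argument: the paper simply invokes \cite[Rk. 3.2]{Sk2} together with the countability of $\Gamma$, which is exactly the equivariant separabilisation of cycles and homotopies (with $\Gamma$-saturation of the extracted separable subalgebra) that you spell out.

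Part (2), however, contains a genuine gap in your main route. You observe (correctly) that $B\rtimes_r\Gamma$ is the closure of the directed union of the subalgebras $B_1\rtimes_r\Gamma$, $B_1\in\mathcal S$, and then assert that ``$KK(A,-)$ commutes with this inductive limit by the result of \cite{Sk2} recalled above''. That result says something different: for separable $A$, $KK(A,C)$ is the inductive limit of $KK(A,D)$ over \emph{all} separable subalgebras $D\subset C$. It does not imply that $KK(A,-)$ commutes with an arbitrary directed union of subalgebras, and indeed that general continuity statement is false: for $B_n=M_{2^n}\subset M_{2^\infty}$ and $A=M_{2^\infty}$, the composite $\varinjlim_n KK(A,M_{2^n})\to KK(A,M_{2^\infty})\to \Hom(K_0(A),K_0(M_{2^\infty}))$ factors through $\varinjlim_n\Hom(\mathbb Z[1/2],\mathbb Z)=0$, so the class of $\id_{M_{2^\infty}}$ is not in the image, even though the connecting maps are injective with dense union. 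What the argument really needs, and what the paper uses, is a \emph{cofinality} statement: every separable subalgebra $D\subset B\rtimes_r\Gamma$ is contained in $B_1\rtimes_r\Gamma$ for some $B_1\in\mathcal S$. Granting this, (2) follows from the non-equivariant statement of \cite{Sk2} because the family $\{B_1\rtimes_r\Gamma\}$ is cofinal among separable subalgebras of $B\rtimes_r\Gamma$. The cofinality itself is easy but must be said: pick a countable dense subset $\{d_k\}$ of $D$, approximate each $d_k$ in norm by elements of $C_c(\Gamma,B)$, let $B_1$ be the (separable, since $\Gamma$ is countable) $\Gamma$-invariant subalgebra of $B$ generated by the countably many coefficients that occur, and use that $B_1\rtimes_r\Gamma$ embeds isometrically, hence as a closed subalgebra, in $B\rtimes_r\Gamma$ (by the compatible conditional expectations you mention); then every $d_k$, and hence $D$, lies in $B_1\rtimes_r\Gamma$. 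Your alternative ``cycle-by-cycle'' sentence hides exactly the same point: separabilising a cycle over $B\rtimes_r\Gamma$ gives an arbitrary separable subalgebra, and passing from it to one of the special form $B_1\rtimes_r\Gamma$ is precisely the step above, which should be made explicit.
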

The proof of (1) goes as in \cite[Rk. 3.2]{Sk2}, by using that $\Gamma$ is a countable group. 

Statement (2) follows from the fact that every separable subalgebra of $B\rtimes_r \Gamma$ is contained in a $B_1\rtimes_r \Gamma$.

\smallskip Note that the same remarks hold also for full crossed products.

\subsection{$KK$-theory with real coefficients}
In \cite{AAS2}, we constructed the functor $KK^G_{\R}$ by taking an inductive limit over $\rm{II}_1$-factors. More precisely, given a locally compact group(oid) and two $G$-algebras $A$ and $B$, we defined $KK^G_\R(A,B)$ as the limit of $KK^G(A,B\otimes M)$ where $M$ runs over $\rm{II}_1$-factors with trivial $G$ action and unital embeddings.

An equivalent definition of $KK^G_\R(A, B)$ can be given by taking  limits of $KK^G_\R(A, B\otimes D)$ over pairs $(D,\lambda)$ where $D$ is a unital $C^*$-algebra with trivial $G$-action, and $\lambda$ is a tracial state on $D$ \cite[Remarks 1.6]{AAS2}.

\subsection{The Kasparov product in $KK^G_{\R}$} Let $x_1\in KK^G_{\R}(A_1,B_1\otimes D)$ and $x_2\in KK^G_{\R}(D\otimes  A_2,B_2)$; represent them as elements $y_1\in KK^G_{\R}(A_1,B_1\otimes M_1\otimes D)$ and $y_2\in KK^G(D\otimes A_2,M_2\otimes B_2)$ where $M_1$ and $M_2$ are $\rm{II}_1$-factors. The Kasparov product $(y_1\otimes 1_{A_2})\otimes_D (1_{B_1\otimes M_1}\otimes y_2)$ is an element in $KK^G(A_1\otimes A_2,B_1\otimes M_1\otimes M_2\otimes B_2)$. Finally using the canonical morphism $M_1\otimes M_2\to M_1\overline {\otimes} M_2$ to the von Neumann tensor product, we obtain an element $x_1\otimes _Dx_2\in KK^G_{\R}(A_1\otimes A_2,B_1\otimes B_2)$.

The Kasparov product in $KK^G_{\R}$ has all the usual properties (bilinearity, associativity, functoriality, ...). In particular, we have:
\begin{lemma}
\label{commute}
The exterior product in $KK^G_{\R}$ is commutative.  
\end{lemma}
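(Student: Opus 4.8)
The plan is to reduce commutativity of the exterior product in $KK^G_{\R}$ to the known commutativity of the exterior product in ordinary (equivariant) $KK$-theory, which holds in the $KK_{sep}$ setting recalled above. Given $x_1\in KK^G_\R(A_1,B_1)$ and $x_2\in KK^G_\R(A_2,B_2)$, represent them by $y_i\in KK^G(A_i,B_i\otimes M_i)$ for $\mathrm{II}_1$-factors $M_1,M_2$ (using the description of $KK^G_\R$ as an inductive limit over $\mathrm{II}_1$-factors). First I would form the two ordinary Kasparov exterior products $y_1\otimes y_2\in KK^G(A_1\otimes A_2, B_1\otimes M_1\otimes B_2\otimes M_2)$ and $y_2\otimes y_1\in KK^G(A_2\otimes A_1, B_2\otimes M_2\otimes B_1\otimes M_1)$, and note that under the flip isomorphisms of the tensor factors these two classes agree, by commutativity of the exterior product in ordinary $KK^G$. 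Pushing both forward along $M_1\otimes M_2\to M_1\overline\otimes M_2$, one gets the two products $x_1\otimes x_2$ and $x_2\otimes x_1$ in $KK^G_\R(A_1\otimes A_2,B_1\otimes B_2)$ respectively $KK^G_\R(A_2\otimes A_1,B_2\otimes B_1)$.

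The key point is then that the flip isomorphism $M_1\overline\otimes M_2\cong M_2\overline\otimes M_1$ of von Neumann algebras is trace-preserving, hence acts trivially on $KK_\R$-classes: concretely, the two embeddings $M_1\overline\otimes M_2\hookrightarrow M_1\overline\otimes M_2\overline\otimes(\text{another }\mathrm{II}_1)$ obtained before and after the flip are conjugate / connected in a way compatible with the inductive limit defining $KK_\R$, so they induce the same map on the limit. More precisely, in the definition of $KK^G_\R$ one takes a colimit over all unital trace-preserving embeddings of $\mathrm{II}_1$-factors; the flip $M_1\overline\otimes M_2\to M_2\overline\otimes M_1$ is such an isomorphism, so it becomes the identity in the colimit. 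This is exactly the mechanism by which $KK_\R$ "forgets" which $\mathrm{II}_1$-factor one used, and it is what upgrades ordinary commutativity (which only gives equality after a flip of coefficient algebras) to honest commutativity in $KK_\R$.

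Concretely I would carry out the steps in this order: (i) choose representatives $y_i\in KK^G(A_i,B_i\otimes M_i)$; (ii) apply commutativity of the exterior Kasparov product in $KK^G$ (equivalently $KK^G_{sep}$, as in the material recalled in Section \ref{separability}) together with the flip of the tensor factors $A_1\otimes A_2\cong A_2\otimes A_1$ and $B_1\otimes M_1\otimes B_2\otimes M_2\cong B_2\otimes M_2\otimes B_1\otimes M_1$; (iii) compose with the map to the von Neumann tensor product and observe that the flip $M_1\overline\otimes M_2\cong M_2\overline\otimes M_1$ is a unital trace-preserving isomorphism of $\mathrm{II}_1$-factors, hence induces the identity on the colimit defining $KK^G_\R$; (iv) conclude $x_1\otimes x_2 = x_2\otimes x_1$ after the (harmless) identification $A_1\otimes A_2\cong A_2\otimes A_1$, $B_1\otimes B_2\cong B_2\otimes B_1$.

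The main obstacle is step (iii): one must check carefully that the flip on the von Neumann tensor product of the two $\mathrm{II}_1$-factors is compatible with the directed system of trace-preserving embeddings used to build $KK^G_\R$, i.e. that it genuinely descends to the identity in the limit rather than merely to some automorphism. This is where the precise formulation of the inductive limit from \cite{AAS2} (the version over pairs $(D,\lambda)$ with $D$ unital and $\lambda$ a tracial state) is needed, since there the relevant morphisms are exactly unital trace-preserving $*$-homomorphisms and the flip visibly is one. Everything else — bilinearity of the construction, functoriality, the fact that flips of minimal tensor products induce the expected isomorphisms in $KK$ — is routine and already subsumed in "the Kasparov product in $KK^G_\R$ has all the usual properties" stated just above.
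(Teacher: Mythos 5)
Your proposal is correct and takes essentially the same route as the paper's proof: choose representatives $y_i\in KK^G(A_i,B_i\otimes M_i)$, invoke commutativity of the ordinary equivariant exterior product (Kasparov, Thm.\ 2.14(8)) up to the flip isomorphisms, and then absorb the flip of the coefficient factors by mapping $M_1\otimes M_2$ into a $\mathrm{II}_1$-factor (e.g.\ $M_1\overline{\otimes}M_2$) and passing to the inductive limit defining $KK^G_{\R}$. The paper leaves your step (iii) implicit, resting on the fact that the limit class is independent of the chosen trace-preserving morphism into a $\mathrm{II}_1$-factor, which is precisely the point you make explicit.
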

\begin{proof}Let  $A_1, A_2, B_1, B_2$ be $G$-algebras, $x_i\in KK_\R^G(A_i,B_i)$. There exist ${\rm II}_1$-factors $M_1, M_2$ with trivial $G$-action such that $x_i$ are images of $y_i\in KK^G(A_i,B_i\otimes M_i)$. By  \cite[Thm 2.14 (8)]{Kas2}, the exterior Kasparov products $y_1\otimes _\C y_2$ and $y_2\otimes _\C y_1$ coincide (under the natural flip isomorphisms) in $KK^G(A_1\otimes A_2,B_1\otimes M_1\otimes B_2\otimes M_2)$. The Kasparov products $x_1\otimes _\C x_2$ and $x_2\otimes _\C x_1$ are, by definition, the images of $y_1\otimes _\C y_2$ and $y_2\otimes _\C y_1$ respectively in the group $KK_\R^G(A_1\otimes A_2,B_1\otimes B_2)$ through a morphism from $M_1\otimes M_2$ to a  ${\rm II}_1$-factor ({\it e.g.}\/ their von Neumann tensor product) and then passing to the inductive limit. They coincide.
\end{proof}

As a direct consequence, we find:

\begin{prop}
For every pair $(A,B)$ of $G$-algebras, $KK^G_{\R}(A,B)$ is a module over the ring $KK^G_{\R}(\C,\C)$  and the Kasparov product is $KK^G_{\R}(\C,\C)$-bilinear. In particular  $KK^G_{\R}(A,A)$ is an algebra over this ring.\hfill $\square$
\end{prop}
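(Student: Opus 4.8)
The plan is to deduce everything from the general formal properties of the Kasparov product in $KK^G_\R$ stated above --- bilinearity, associativity, functoriality --- together with the commutativity established in Lemma \ref{commute}, without revisiting any analytic content.

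First I would equip $KK^G_\R(\C,\C)$ with its ring structure. Since $\C$ is the unit for the minimal tensor product, the exterior Kasparov product
\[
KK^G_\R(\C,\C)\times KK^G_\R(\C,\C)\longrightarrow KK^G_\R(\C\otimes\C,\C\otimes\C)=KK^G_\R(\C,\C)
\]
is a multiplication; it is associative by associativity of the Kasparov product in $KK^G_\R$, has unit $1_\C\in KK^G_\R(\C,\C)$, and is commutative by Lemma \ref{commute}. Thus $KK^G_\R(\C,\C)$ is a commutative unital ring. Next, for a pair $(A,B)$ I would define the action of $\lambda\in KK^G_\R(\C,\C)$ on $x\in KK^G_\R(A,B)$ by the exterior product $\lambda\cdot x:=\lambda\otimes_\C x\in KK^G_\R(\C\otimes A,\C\otimes B)=KK^G_\R(A,B)$, using the canonical identifications $\C\otimes A\cong A$ and $\C\otimes B\cong B$. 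The module axioms $(\lambda\mu)\cdot x=\lambda\cdot(\mu\cdot x)$ and $1_\C\cdot x=x$ are instances of associativity and the unit property of the Kasparov product, and additivity in each variable is its bilinearity; hence $KK^G_\R(A,B)$ is a $KK^G_\R(\C,\C)$-module.

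It then remains to check that the Kasparov product
\[
\otimes_D\colon KK^G_\R(A_1,B_1\otimes D)\times KK^G_\R(D\otimes A_2,B_2)\longrightarrow KK^G_\R(A_1\otimes A_2,B_1\otimes B_2)
\]
is $KK^G_\R(\C,\C)$-bilinear, that is, for $\lambda\in KK^G_\R(\C,\C)$,
\[
(\lambda\cdot x_1)\otimes_D x_2=\lambda\cdot(x_1\otimes_D x_2)=x_1\otimes_D(\lambda\cdot x_2).
\]
The first equality is pure associativity: it merely reassociates the exterior $\C$-factor $\lambda$ past the contraction over $D$. For the second equality one must transport the exterior factor $\lambda$ from the left of $x_1$ to the left of $x_2$; this is precisely where Lemma \ref{commute} enters, since commutativity of the exterior product lets one slide $\lambda$ through, and combining this with associativity gives the identity. (Here I would keep track of the flip isomorphisms relating $\C\otimes(-)$ on the two sides, exactly as in the proof of Lemma \ref{commute}.) Applying this with $D=A$ and the evident trivial factors, the composition product on $KK^G_\R(A,A)$ is $KK^G_\R(\C,\C)$-bilinear, so $KK^G_\R(A,A)$ is a $KK^G_\R(\C,\C)$-algebra.

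I do not expect a genuine obstacle here: the proof is entirely formal. The only points requiring a little care are distinguishing which identities are mere associativity (yielding the module structure) from those that genuinely need commutativity (yielding ring-bilinearity, hence the algebra statement), and bookkeeping the canonical identifications $\C\otimes A\cong A$ consistently on source and target.
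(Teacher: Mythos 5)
Your proof is correct and follows exactly the route the paper intends: the proposition is stated there as a formal consequence of the usual properties of the Kasparov product in $KK^G_{\R}$ together with the commutativity of the exterior product (Lemma \ref{commute}), and your write-up simply makes that formal argument explicit, including the correct identification of which steps need only associativity and which need commutativity.
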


\begin{remarks} $\,$ 
\begin{enumerate}
\item Note also that using the group morphism $G\to \{1\}$, we obtain a ring morphism $\R=KK_\R(\C,\C)\to  KK^G_{\R}(\C,\C)$ and it follows that the groups $KK^G_{\R}(A,B)$ are real vector spaces and $KK^G_{\R}(A,A)$ is an algebra over $\R$ (see also \cite[Remark 1.9]{AAS2}).

\item Let $M$ be a $\rm{II}_1$-factor with trivial $G$-action. The identity of $M$ defines an element $[\id_M]\in KK^{G}_{\R}(M,\C)$. We have a canonical map $KK^G_{\R}(A,B\otimes M ) \to KK^G_{\R}(A,B)$ by Kasparov product with this element. Let us describe it:\\
let $N$ be a $\rm{II}_1$-factor and $x \in KK^G(A,B\otimes M\otimes N)$ representing in the limit an element in $KK^G_{\R}(A,B\otimes M )$. Denote by $M\overline{\otimes} N$ the von Neumann tensor product of $M$ and $N$. Using the embedding $B\otimes M\otimes N\to B\otimes (M\overline{\otimes} N)$ we get a morphism $KK^G(A,B\otimes M\otimes N)\to KK_\R^G(A,B)$. 

Let  $\iota :\C\to M$ denote the unital inclusion. The element $\iota^*[\id_M]$ is the unit element of the ring $KK^{G}_{\R}(\C,\C)$. The element $\iota_*[\id_M]$ is therefore an idempotent in $KK^{G}_{\R}(M,M)$. It is not clear what this idempotent is.

\item The exterior product map $KK^G(A, B)\times KK^G(C,D\otimes M)\to KK^G(A\otimes C, B\otimes D\otimes M)$ induces an \lq\lq inhomogeneous product\rq\rq ~map
\begin{equation}\label{inhomogeneous}
\otimes:KK^G(A, B)\otimes KK^G_\R(C,D)\to KK^G_\R(A\otimes C, B\otimes D)\ .
\end{equation}
Clearly this is the same as computing the product in $KK_\R$ after applying the change of coefficient map $KK^G(A, B)\to KK^G_\R(A, B)$ (see also \cite[Section 1.5]{AAS2}).

\item \textbf{Real $KK$ for non-separable algebras.}
\label{nonsepKKR}
We adapt the discussion in the above Section \ref{separability} to define $KK_\R^G(A,B)$ when $G$ is second countable locally compact, and $A,B$ are generally non-separable $C^*$-algebras. 
\begin{itemize}
\item To begin with let's take $A$ separable, $B$ any $C^*$-algebra and $G$ any locally compact group. Then there exists the inductive limit
$\displaystyle\varinjlim_N KK^G(A,B\otimes N)$ with $N$ running over the space of all $\rm{II}_1$-factors (acting on a fixed separable Hilbert space) with tracial embeddings as morphisms. The existence of the limit is given by exactly the same proof in \cite{AAS2} where only the general properties of the space of $\rm{II}_1$-factors are used and there is no need of separability assumptions on $B$. We can thus put:
$$
KK_{\mathbb{R}}^G(A,B):= \varinjlim_N KK^G(A,B \otimes N).
$$
\item Assume  $A$ is separable; start with any element in $  KK_{\R}^{G}(A,B)$ represented by some $x \in KK^{G}(A,B \otimes N)$ then we can find a separable subalgebra $D \subset B\otimes N$ whose image in $KK^G(A, B\otimes N)$ is $x$. We can then find also a separable $C^*$-subalgebra $B_1 \subset B$ and an element $x_1 \in KK^{G}(A,B_1 \otimes N)$ representing $x$. This implies 
$$KK^G_{\R}(A,B)=  
\varinjlim_{\substack{B' \subset B\\  B' \textrm{separable} } }
KK^{G}_{\R}(A,B).
$$
\item By the above remark we can define $KK^G_{\R}(A,B)$ for not necessarily separable $A$ and $B$ and countable $G$. It is the analogous of $KK_{sep}^G(A,B)$ of section \ref{separability}:
 \begin{equation}\label{KKRnonsep}
 	KK^G_{sep,\R}(A,B):=  
\varprojlim_{\substack{A' \subset A\\  A' \textrm{separable} } }
KK^{G}_{\R}(A',B)=\varprojlim_{\substack{A' \subset A\\  A' \textrm{separable} } }\varinjlim_N KK^G(A',B\otimes N),
 \end{equation}
 and is also given with a well defined Kasparov product.
\end{itemize}
\end{enumerate}
\end{remarks}

\subsection{$KK$-elements associated with traces} \label{traceKKR}If $D$ is a $C^*$-algebra (with trivial $G$-action) and $\lambda$ is a tracial state on $D$, we may map $D$ in a trace preserving way into a II$_1$-factor and thus  $\lambda$ gives rise to a natural $KK$-class with real coefficients $[\lambda]\in KK_{\R}(D,\C)$.

\medskip
Let us make some remarks.

\begin{remarks}\label{Remtraces} Let $D$ be a $G$-algebra and $\lambda$ a tracial state on $D$. 
\label{remarksKKG}
\begin{enumerate}
\item  If the action of $G$ on $D$ is inner - \textit{i.e}.\ through a continuous morphism of $G$ into the set of unitary elements of $D$, then $D$ with the $G$ action is Morita equivalent to the algebra $D$ with trivial $G$ action, and thus  $[\lambda]\in KK^G_{\R}(D,\C)$ is still well defined.

\item Recall that when $G$ is discrete and acts trivially on $B$ the groups $KK^G(A, B)$ and $KK(A\rtimes G, B)$ coincide (the crossed product $A\rtimes G$ is the maximal one).  The same isomorphism holds at $KK_\R^G$ level. 
\item If $G$ is discrete and $\lambda $ is invariant, then the crossed product algebra $D\rtimes G$ carries the natural (dual) trace associated with $\lambda$. We have an equivariant inclusion $D\to D\rtimes G$ where the action of $G$ is now inner on $D\rtimes G$. We thus still get an element $[\lambda]\in KK^G_{\R}(D,\C)$.

\item Let $\Gamma$ be a discrete group. Then by the discussion above every tracial state on $C^*\Gamma$ defines an element of $KK_\R(C^*\G,\C)=KK_\R^{\G}(\C,\C).$ Let $\sigma,\sigma'$ be tracial states on $C^*\G$.  Let $\delta:C^*\G \longrightarrow C^*\G \otimes C^*\G$ denote the coproduct; then we can write $[\sigma]\otimes [\sigma'] = [\sigma.\sigma']$ where $\sigma.\sigma'$ is the tracial state $\sigma.\sigma':=(\sigma \otimes \sigma') \circ \delta$. 
We have $\sigma.\sigma'(g)=\sigma(g)\sigma'(g)$ on every $g\in \G$.

In particular, the canonical group trace  $\tr_\Gamma$ which satisfies  $\tr_\Gamma(e)=1$ and  $\tr_\Gamma(g)=g$ for $g\ne e$ absorbs every tracial state:  $[\sigma]\otimes[\tr_\Gamma]= [\tr_\Gamma]$.
\end{enumerate}
\end{remarks}

\subsection{Descent morphism}
Let us describe the descent morphism in $KK_\R$.

Let $\G$ be a discrete group, then there is a descent map 
\begin{equation}
\label{eq:desc_max}
J^{\Gamma}: KK^\G_{\R}(A,B) \longrightarrow KK_{\R}(A \rtimes \Gamma,B \rtimes \G),
\end{equation} 
with respect to the maximal crossed product, which is induced by the classical Kasparov descent map on $KK^\Gamma$. It is natural with respect to the Kasparov product and satisfies  
$J^{\G}(1_A)=1_{A \rtimes \G}$.

Indeed let $M \to N$ be a morphism of $C^*$-algebras, both with trivial $\G$ action, and let $B$ a $\G$-algebra; we have a commutative diagram
\begin{equation}\label{crosstensor}
\xymatrix{(B\otimes M) \rtimes \Gamma \ar[d]\ar[r] & (B\otimes N) \rtimes \Gamma \ar[d] \\
( B \rtimes \Gamma)\otimes M \ar[r] & (B\rtimes \G)\otimes N, 
}
\end{equation}
which follows from the universal properties of the full crossed product and the fact that $\G$ acts trivially on $M$ and $N$. When $M \hookrightarrow N$ is a unital embedding of $\rm{II}_1$-factors, applying the $KK$ functor and the Kasparov descent map before passing to the limit defines \eqref{eq:desc_max}. 

\medskip We have a commutative diagram similar to \eqref{crosstensor} involving reduced crossed products. Actually, when taking reduced crossed products (and minimal tensor products) the vertical arrows are isomorphisms: 
 let $B$ be acting  faithfully on the Hilbert space $H_1$ and $M$ on the Hilbert space $H_2$; then both $(B\otimes M) \rtimes_{r} \Gamma$ and $(B\rtimes_{r} \Gamma)\otimes M$ are canonically and faithfully represented in the Hilbert space $H_1\otimes   H_2 \otimes \ell^2\Gamma$ - and the same with $N$ instead of $M$. 

Therefore, in the same way we define a reduced descent morphism 
\begin{equation}
\label{eq:desc_red}
J^\Gamma_r :KK_{\R}^\G(A,B) \longrightarrow KK_{\R}(A \rtimes_r \Gamma, B\rtimes_r \Gamma)\ .
\end{equation}

\section{The $\tau$-element and the $\tau$-part}
Let $\Gamma$ be a discrete group; the $\tau$-\emph{element} of $\Gamma$ is associated to $\Gamma$ by means of its canonical trace $\operatorname{tr}_\Gamma:C^*\Gamma \rightarrow \C$.
Indeed applying point (4) of Remark \ref{Remtraces} to $\operatorname{tr}_\Gamma$ we get a natural class that we call $[\tau]\in KK^\Gamma_{\R}(\C,\C)$. 

Two algebraic properties of $[\tau]$ are immediate to check: $[\tau]$ is an idempotent, \textit{i.e}.\ $[\tau]\otimes [\tau]=[\tau]$  (see \cite{AAS2}); furthermore, $[\tau]$ is \emph{central} in $KK_{\R}^\Gamma$, \textit{i.e}.\  for any  $\Gamma$-algebras $A,B$ and any $[x] \in KK_{\R}^\Gamma(A;B)$ then  $[x]\otimes [\tau]= [\tau]\otimes [x]$, as follows by Lemma \ref{commute}.

Since $\tau$ is fundamental for what follows, it is worth giving a more detailed description:
\begin{itemize}
\item one starts with any trace preserving morphism $\varphi_N:C^*\Gamma \longrightarrow N$ to a $\rm{II}_1$-factor which defines a class in $KK(C^*\Gamma,N)$ and in the limit a class $[\varphi_N]\in KK_{\R}(C^*\Gamma;\C).$
\item Consider $\Gamma$ acting trivially on $\C$ and apply the canonical isomorphism $KK_{\R}(C^*\G,\C)\simeq KK_{\R}^\G(\C,\C)$ to the class $[\varphi_N]$. The element we get is $[\tau]$.
\item Concretely $[\tau]$ is represented in the limit by the class in $KK^\Gamma(\C,N)$ of the $\G$-$(\C,N)$ bimodule which is $N$ (considered as a Hilbert $N$-module) with the $\Gamma$-action $\gamma \cdot n= \varphi_N(\gamma) n$. If $\Gamma$ is an i.c.c. group, one can take  $N=L\Gamma$ to be the group von Neumann algebra of $\Gamma$. More generally one can for instance embed $\Gamma$ in an i.c.c. group $\Gamma'$ and take  $N$ to be the group von Neumann algebra of $\Gamma'$.
\end{itemize}

\subsection{The $\tau$-part of $K$-theory}
\subsubsection{The $\tau$-part of $KK_\R^\Gamma$ and of crossed products}
The action of the idempotent $[\tau]$ suggests the following definition.
\begin{definition}
 We will denote by $KK_{\R}^\Gamma(A,B)_{\tau}$ the image of the idempotent $[\tau]$ acting on $KK_{\R}^\Gamma(A,B)$ and call it the \emph{$\tau$-part of $KK_\R^\G(A,B)$}.
 \end{definition}

It is natural to define  the $\tau$-part also for the $K$-theory of group $C^*$-algebras (or more generally crossed products). To do so, we consider the action of the element $J^\Gamma([\tau])$ obtained by applying the descent morphisms, as follows. 

\begin{definition}[$\tau$-part of crossed products]
Let $J^\Gamma$  and $J^\Gamma_r$ be the descent maps defined in \eqref{eq:desc_max}, \eqref{eq:desc_red}. Let $A$ be a $\Gamma$-$C^*$-algebra. Let $1_A$ denote the unit of $KK^\Gamma(A,A)$. We set
\begin{align}
&[\tau^A]_{\text{max}}:=J^\Gamma(1_A\otimes [\tau])\in KK_\R(A\rtimes \Gamma, A\rtimes\Gamma)\\
&\;\;\;[\tau^A]_{r}:=J^\Gamma_r(1_A\otimes [\tau])\in KK_\R(A\rtimes_r\Gamma, A\rtimes_r\Gamma)\ .
\end{align} 
When $A=\C$ we simply write $[\tau]_{\text{max}}$ and $[\tau]_{r}$.

Let now $D$ be any $C^*$-algebra. We call \emph{the $\tau$-part of} $KK_\R(D,A\rtimes_r\Gamma)$ the image of the idempotent $[\tau^A]_{r}$ acting by right multiplication on $KK_{\R}(D,A\rtimes_r\Gamma)$:
$$KK_\R(D,A\rtimes_r\Gamma)_{\tau}:=\textrm{Image}    \big{\{} \cdot \otimes    [\tau^A]_{r}:   KK_{\R}(D,A\rtimes_r\Gamma) \longrightarrow KK_{\R}(D,A\rtimes_r\Gamma)                  \big{\}}.$$  
Analogously,  \emph{the $\tau$-part of} $KK_\R(D,A\rtimes\Gamma)$, denoted $KK_\R(D,A\rtimes\Gamma)_{\tau}$, is the image of  $\,[\tau^A]_{\text{max}}$ acting by left multiplication on $KK_{\R}(D,A\rtimes \Gamma)$.
When $D=\C$ we abbreviate with $K_{*,\R}(C^*\Gamma)_{\tau}$ and  $K_{*,\R}(C_r^*\Gamma)_{\tau}$. 
\end{definition}

\subsubsection{Reduced versus maximal crossed products}
\label{section:redmax}
The name $\tau$-part is unambiguous; indeed the next Proposition shows that $KK_\R(D,A\rtimes \Gamma)_{\tau}$ and $KK_\R(D,A\rtimes_r\Gamma)_{\tau}$ are canonically isomorphic.

Let $\lambda^A:A\rtimes\Gamma\to A\rtimes_r \Gamma$ be the natural morphism and denote by $[\lambda^A]\in KK(A\rtimes\Gamma, A\rtimes_r \Gamma)$ its class. 

There is also a natural morphism
$\Delta^A:A\rtimes_r\Gamma\to (A\rtimes \Gamma)\otimes C^*_r\Gamma$ induced by the coproduct. Indeed, the coproduct gives a morphism $\Delta_{max}^A:A\rtimes \Gamma\to (A\rtimes \Gamma)\otimes C^*_r(\Gamma)$ defined by the covariant morphism $a\mapsto a\otimes 1$ and $g\mapsto g\otimes \lambda _g$.
Let $A\rtimes \Gamma$ be faithfully represented on a Hilbert space $H$. Then we obtain a faithful representation $\pi$ of $(A\rtimes \Gamma)\otimes C^*_r(\Gamma)$ on $H\otimes \ell^2(\Gamma)$. The composition $\pi\circ \Delta_{max}^A$ is the canonical faithful representation of the reduced crossed product. It follows that the morphism $\Delta _{max}^A$ factors through $A\rtimes_r \Gamma$: there is a (faithful) morphism $\Delta^A:A\rtimes_r\Gamma\to (A\rtimes \Gamma)\otimes C^*_r\Gamma$ such that $\Delta_{max}^A=\Delta^A\circ\lambda^A$.

Composing with a trace preserving embedding of $C^*_r\Gamma$ in a $\rm{II}_1$-factor $N$, we thus obtain a class $[\Delta_\tau^A]=[\Delta^A]\otimes _{C^*_r\Gamma}[\tau]\in KK_\R(A\rtimes_r\Gamma, A\rtimes \Gamma)$.

\begin{prop}{\emph{(}Cf. \cite[proof of Remark 2.4]{AAS2}\emph{)}}  
\label{prop:redmax}
We have
	\begin{align}
\label{eq:factAcrossG}
&[\lambda^A]\otimes [\Delta^A_\tau] =[\tau^A]_{\operatorname{max}}\in KK_\R(A\rtimes\Gamma, A\rtimes\Gamma), \quad  \\ \nonumber
&[\Delta^A_\tau]\otimes [\lambda^A] = [\tau^A]_{r}  \;\;\;\;\in KK_\R(A\rtimes_r\Gamma, A\rtimes_r\Gamma)\ .
	\end{align} 
Then for every $C^*$-algebra $D$ the product with $[\Delta^A_\tau]$ induces a canonical isomorphism
$$
\xymatrix{
KK_\R(D, A\rtimes_r\Gamma)_{\tau} \ar[r]^{{\scalebox{1.5}[1]{$\simeq$}}}&  KK_\R(D, A\rtimes \Gamma)_{\tau}}.
$$ 

\end{prop}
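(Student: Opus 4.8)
The plan is to prove the two displayed factorisation identities first, and then deduce the isomorphism as a formal consequence of the fact that $[\tau^A]_{\max}$ and $[\tau^A]_r$ are idempotents whose ranges are the $\tau$-parts.

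\medskip\noindent\textbf{Step 1: the factorisation identities.} First I would unwind the definitions of $[\lambda^A]$, $\Delta^A$ and $[\Delta^A_\tau]$. The composition $\lambda^A\circ$ (nothing) together with $\Delta^A$ is compatible because, at the level of $*$-homomorphisms, the coproduct $\Delta^A\colon A\rtimes_r\Gamma\to (A\rtimes\Gamma)\otimes C^*_r\Gamma$ sends $a u_\gamma\mapsto (a u_\gamma)\otimes u_\gamma$, and composing with $\lambda^A\otimes \mathrm{id}$ gives $a u_\gamma\mapsto (\lambda^A(a u_\gamma))\otimes u_\gamma$, which up to the obvious identification is precisely the descent $J^\Gamma_r$ applied to the diagonal inclusion describing $1_A\otimes[\tau]$. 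More carefully, recall from Remark \ref{Remtraces}(2)--(3) and the description of $[\tau]$ in Section 3 that $1_A\otimes[\tau]\in KK^\Gamma_\R(A,A)$ is represented (in the limit over $\mathrm{II}_1$-factors $N$) by the $\Gamma$-$(A,A\otimes N)$-bimodule $A\otimes N$ with $\Gamma$ acting through $\gamma\cdot(a\otimes n)=(\gamma a)\otimes\varphi_N(\gamma)n$; applying $J^\Gamma_r$ and then the map to the von Neumann tensor product, and using the commuting square \eqref{crosstensor} for reduced crossed products, one identifies $[\tau^A]_r$ with $[\Delta^A]\otimes_{C^*_r\Gamma}[\tau]$ post-composed with $[\lambda^A]$ — that is, $[\Delta^A_\tau]\otimes[\lambda^A]=[\tau^A]_r$. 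The maximal identity $[\lambda^A]\otimes[\Delta^A_\tau]=[\tau^A]_{\max}$ is obtained symmetrically, using instead the coproduct into $(A\rtimes\Gamma)\otimes C^*_r\Gamma$ precomposed with $\lambda^A$ and the naturality of descent with respect to the Kasparov product (which gives $J^\Gamma(1_A\otimes[\tau])=J^\Gamma(1_A)\otimes(\text{descent of }[\tau])$ and $J^\Gamma(1_A)=1_{A\rtimes\Gamma}$). This is essentially the computation referenced in \cite[proof of Remark 2.4]{AAS2}, now carried out with coefficients in $A$, and I expect this bookkeeping — keeping track of which crossed product and which tensor factor each class lives on — to be the main technical obstacle; the rest is formal.

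\medskip\noindent\textbf{Step 2: from the identities to the isomorphism.} Fix a $C^*$-algebra $D$. Right Kasparov product with $[\Delta^A_\tau]$ gives a map $\Phi\colon KK_\R(D,A\rtimes_r\Gamma)\to KK_\R(D,A\rtimes\Gamma)$, and right product with $[\lambda^A]$ gives $\Psi\colon KK_\R(D,A\rtimes\Gamma)\to KK_\R(D,A\rtimes_r\Gamma)$. By associativity of the Kasparov product and Step 1, $\Psi\circ\Phi$ is right multiplication by $[\Delta^A_\tau]\otimes[\lambda^A]=[\tau^A]_r$, and $\Phi\circ\Psi$ is right multiplication by $[\lambda^A]\otimes[\Delta^A_\tau]=[\tau^A]_{\max}$. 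Wait — I must be careful that the $\tau$-part of $KK_\R(D,A\rtimes\Gamma)$ is defined via \emph{left} multiplication by $[\tau^A]_{\max}$, while here products act on the right; so I would instead observe (or first prove as a small lemma) that left and right actions of $[\tau^A]_{\max}$ on $KK_\R(D,A\rtimes\Gamma)$ have the same image, which follows because $[\tau^A]_{\max}$ is itself of the form $J^\Gamma(1_A\otimes[\tau])$ with $[\tau]$ central (Lemma \ref{commute} and the centrality noted in Section 3), so its descent is central in $KK_\R(A\rtimes\Gamma,A\rtimes\Gamma)$; hence the two images coincide with the common range of this idempotent. Granting this, $\Phi$ restricts to a map of $\tau$-parts: for $\xi$ in the range of $\cdot\otimes[\tau^A]_r$ one has $\xi=\xi\otimes[\tau^A]_r$, so $\Phi(\xi)=\xi\otimes[\Delta^A_\tau]=\xi\otimes[\tau^A]_r\otimes[\Delta^A_\tau]=\xi\otimes[\Delta^A_\tau]\otimes[\lambda^A]\otimes[\Delta^A_\tau]=\Phi(\xi)\otimes[\tau^A]_{\max}$, using $[\Delta^A_\tau]\otimes[\lambda^A]\otimes[\Delta^A_\tau]=[\tau^A]_r\otimes[\Delta^A_\tau]=[\Delta^A_\tau]$ (the last equality because $[\tau^A]_r$ acts as the identity on elements of the form $\cdot\otimes[\Delta^A_\tau]$, which in turn follows from $[\tau^A]_r$ being idempotent and $\Phi\circ\Psi\circ\Phi=\Phi$ — or more directly from $[\tau]\otimes[\tau]=[\tau]$). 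Symmetrically $\Psi$ sends the $\tau$-part of $KK_\R(D,A\rtimes\Gamma)$ into that of $KK_\R(D,A\rtimes_r\Gamma)$. Finally, on these $\tau$-parts $\Psi\circ\Phi$ and $\Phi\circ\Psi$ act as the identity, since there the idempotents $[\tau^A]_r$ and $[\tau^A]_{\max}$ act trivially by definition of the $\tau$-part. Hence $\Phi$ is the desired isomorphism $KK_\R(D,A\rtimes_r\Gamma)_\tau\xrightarrow{\ \simeq\ }KK_\R(D,A\rtimes\Gamma)_\tau$, with inverse $\Psi$. Naturality in $D$ is immediate since everything is given by Kasparov products with fixed classes.
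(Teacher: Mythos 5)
Your proposal is correct and follows essentially the same route as the paper: the factorisations are obtained by identifying the descent of the $(\C,N)$-bimodule representing $[\tau]$ (tensored by $1_A$) with the morphism $au_\gamma\mapsto au_\gamma\otimes\varphi_N(\gamma)$, i.e.\ with $\Delta^A_\tau$ composed with $\lambda^A$ in the appropriate order, and the isomorphism of $\tau$-parts then follows purely formally from associativity and idempotency of $[\tau]$, exactly as in the paper. Your detour about ``left versus right'' multiplication addresses a non-issue (for $x\in KK_\R(D,A\rtimes\Gamma)$ the only defined product with $[\tau^A]_{\mathrm{max}}$ is $x\otimes_{A\rtimes\Gamma}[\tau^A]_{\mathrm{max}}$, which is what the paper means), so the stronger claim that the descent of $[\tau]$ is central in all of $KK_\R(A\rtimes\Gamma,A\rtimes\Gamma)$ -- which Lemma \ref{commute} only gives against elements coming from descent -- is not actually needed.
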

\begin{proof}
Represent $[\tau]$ using a trace preserving morphism $\varphi: C_r^*\Gamma \longrightarrow N$. Then $[\tau^A]_{\operatorname{max}}$ is given by the composition $(\id\otimes \varphi)\circ \Delta_{max}^A=(\id\otimes \varphi)\circ \Delta^A\circ\lambda^A$. The first equality follows.

In the same way, $[\tau^A]_r$ is given by the composition $(\id\otimes \varphi)\circ \Delta_{r}^A$, where $\Delta_{r}^A$ is the usual coproduct $A\rtimes_r\Gamma\to (A\rtimes_r\Gamma)\otimes C^*_r\Gamma$. The second equality follows since $\Delta_r^A=(\lambda^A\otimes \id)\circ \Delta^A$.
\end{proof}

We can interpret the above fact by saying that $[\tau]$ belongs to the ``$K$-amenable part of $KK^\Gamma_\R(\C,\C)$'' in the spirit of \cite{Cuntz}.

\subsubsection{Functoriality}\label{functorial}
We remark here a  functoriality property of the $\tau$-parts that we need later. 

Let $A$ and $B$ be $\Gamma$-algebras and $x\in KK_{\R}^\Gamma (A,B)$. 

\begin{prop}
The map $\,\cdot \:\otimes_{J_r(x)}:KK_{\R}(\C, A\rtimes_r \G) \longrightarrow KK_{\R}(\C, B\rtimes_r \G)$ preserves the $\tau$-parts inducing a map
$KK_{\R}(D, A\rtimes_r \G)_\tau \longrightarrow KK_{\R}(D, B\rtimes_r \G)_\tau.$
In particular if $f:A\to B$  is  an equivariant morphism,  the induced map $f_*:KK_{\R}(\C, A\rtimes_r \G) \longrightarrow KK_{\R}(\C, B\rtimes_r \G)$ preserves the $\tau$-parts.
\end{prop}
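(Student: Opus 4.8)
The plan is to reduce the statement to a single commutation identity in $KK_\R(A\rtimes_r\Gamma, B\rtimes_r\Gamma)$, namely
\begin{equation*}
J^\Gamma_r(x)\otimes [\tau^B]_{r}=[\tau^A]_{r}\otimes J^\Gamma_r(x),
\end{equation*}
and then to conclude by purely formal manipulations with the Kasparov product.

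First I would record that $[\tau^A]_{r}$ and $[\tau^B]_{r}$ are genuine idempotents. Indeed $1_A\otimes[\tau]$ is an idempotent in $KK^\Gamma_\R(A,A)$, because exterior product with $1_A$ is multiplicative and $[\tau]\otimes[\tau]=[\tau]$; since the reduced descent $J^\Gamma_r$ is natural with respect to the Kasparov product and sends $1$ to $1$, it follows that $[\tau^A]_{r}\otimes[\tau^A]_{r}=[\tau^A]_{r}$, and similarly for $B$. Consequently, for every $C^*$-algebra $D$ the $\tau$-part $KK_\R(D,A\rtimes_r\Gamma)_\tau$ is exactly $\{\xi\,:\,\xi\otimes[\tau^A]_{r}=\xi\}$ (the standard fact that the image of an idempotent acting on an abelian group equals its fixed-point set, already used for the other $\tau$-parts).

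Next comes the commutation identity, which carries the only real (if modest) content of the proof. The composition of $x\in KK^\Gamma_\R(A,B)$ with $1_B\otimes[\tau]\in KK^\Gamma_\R(B,B)$ equals, by the interplay of interior and exterior products, the module action of $[\tau]$ on $x$; and since $[\tau]$ is central in $KK^\Gamma_\R$ (Lemma \ref{commute}), this in turn equals $(1_A\otimes[\tau])\otimes_A x$. In other words $x\otimes_B(1_B\otimes[\tau])=(1_A\otimes[\tau])\otimes_A x$ in $KK^\Gamma_\R(A,B)$. Applying the reduced descent $J^\Gamma_r$ to both sides and using its naturality with respect to the Kasparov product gives $J^\Gamma_r(x)\otimes[\tau^B]_{r}=[\tau^A]_{r}\otimes J^\Gamma_r(x)$. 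The point requiring (routine) care is that all of these compatibilities — the interior/exterior interplay, centrality of $[\tau]$, naturality of $J^\Gamma_r$ — must be invoked in the inductive-limit-over-$\mathrm{II}_1$-factors model $KK_\R$, and for possibly non-separable $D$; but this is precisely what Section \ref{sec:realKK} establishes, so the step is essentially bookkeeping.

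Finally, given $\xi\in KK_\R(D,A\rtimes_r\Gamma)_\tau$, i.e. $\xi\otimes[\tau^A]_{r}=\xi$, associativity of the Kasparov product gives
\begin{equation*}
\big(\xi\otimes J^\Gamma_r(x)\big)\otimes[\tau^B]_{r}
=\xi\otimes\big(J^\Gamma_r(x)\otimes[\tau^B]_{r}\big)
=\xi\otimes\big([\tau^A]_{r}\otimes J^\Gamma_r(x)\big)
=\big(\xi\otimes[\tau^A]_{r}\big)\otimes J^\Gamma_r(x)
=\xi\otimes J^\Gamma_r(x),
\end{equation*}
so $\xi\otimes J^\Gamma_r(x)\in KK_\R(D,B\rtimes_r\Gamma)_\tau$, which is the assertion (the case $D=\C$ included). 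For the last sentence, an equivariant morphism $f\colon A\to B$ defines a class $[f]\in KK^\Gamma(A,B)$ and hence, via the change-of-coefficients map, an element $x_f\in KK^\Gamma_\R(A,B)$ whose reduced descent $J^\Gamma_r(x_f)$ is the class of $f\rtimes_r\Gamma$; since $f_*$ is right Kasparov multiplication by that class, the special case follows from the general one. I do not expect any genuine obstacle here: the statement is formal once the commutation identity is in place.
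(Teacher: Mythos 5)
Your proof is correct and follows essentially the same route as the paper: both arguments rest on the centrality of $[\tau]$ (Lemma \ref{commute}), the multiplicativity of the reduced descent $J^\Gamma_r$, and associativity of the Kasparov product, the only cosmetic difference being that you first isolate the commutation identity $J^\Gamma_r(x)\otimes[\tau^B]_{r}=[\tau^A]_{r}\otimes J^\Gamma_r(x)$ and use the fixed-point characterisation of the $\tau$-part, whereas the paper manipulates $y\otimes J^\Gamma_r(x)$ directly.
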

\begin{proof}
Let $y\in KK_{\R}(D; A\rtimes_r \G)_\tau$. Then $y=y\otimes J_r(1_A\otimes [\tau])$ and $y\otimes J_r(x)=y\otimes J_r([\tau]\otimes x)=y\otimes J_r(x\otimes [\tau])\in KK_{\R}(D; B\rtimes_r \G)_\tau.$
\end{proof}

This discussion holds of course also for the full crossed product.

\subsection{Naturality of the $\tau$-part of $KK_\R(\C, C^*_r\Gamma)$ with respect to group morphisms}
\label{naturalitytau}

Let $\varphi:\Gamma_1 \to \Gamma_2$ be a morphism of groups. It defines a morphism $\varphi:C^*\Gamma_1\to C^*\Gamma_2$ and therefore an element $[\varphi]\in KK(C^*\Gamma_1,C^*\Gamma_2)$. This morphism is not defined in general at the level of reduced $C^*$-algebras (if $\ker \varphi$ is not amenable) unlike the left hand side $K_*^{\rm top}(\Gamma)$ of the Baum--Connes assembly map. 

Using the $\tau$ elements, we can easily bypass this difficulty.

We denote by $[\tau_1]\in KK^{\Gamma_1}_\R(\C,\C)$ and $[\tau_2]\in KK^{\Gamma_2}_\R(\C,\C)$ the corresponding $\tau$-elements. 

Define $[\varphi]_r\in KK_\R(C_r^*\Gamma_1,C_r^*\Gamma_2)$ by putting $[\varphi]_r=(\lambda_2\circ \varphi)_*[\Delta_{\tau_1}]$ where $[\Delta_{\tau_1}]=[\Delta^\C_{\tau_1}]\in KK_\R(C_r^*\Gamma_1,C^*\Gamma_1)$ was defined in section \ref{section:redmax}.

By Kasparov product by $[\varphi]_r$, we obtain a linear map 
$
KK_{\R}(\C,C_r^*\Gamma_1)\overset{\otimes [\varphi]_r}{\longrightarrow} KK_{\R}(\C,C_r^*\Gamma_2).
$

\begin{claim} 
$[\phi]_r=J_r^{\Gamma_1}([\tau_1])\otimes [\varphi]_r\otimes J_r^{\Gamma_2}([\tau_2])$. In particular, the map $\otimes [\varphi]_r$ preserves the $\tau$-parts.  
\end{claim}

 \medskip Indeed, \begin{itemize}
\item $[\Delta_{\tau_1}]=J_r^{\Gamma_1}([\tau_1])\otimes [\Delta_{\tau_1}]$, and thus $[\phi]_r=J_r^{\Gamma_1}([\tau_1])\otimes [\varphi]_r$.
\item The element $[\varphi]_r$ is the class of a morphism $C^*_r(\Gamma_1)\to C^*_r(\Gamma_2)\otimes N_1$ given by $\delta_{g_1}\mapsto \delta_{\varphi({g_1})}\otimes j_1(\delta_{g_1})$ (for ${g_1}\in \G_1$) where $N_1$ is any $\rm{II}_1$-factor and $j_1:C^*_r\Gamma_1\to N_1$ is any trace preserving morphism. 

Let $j_1:C^*_r\Gamma_1\to N_1$ and $j_2:C^*_r\Gamma_2\to N_2$ be trace preserving inclusions into $\rm{II}_1$-factors. The element $[\varphi]_r\otimes J_r^{\Gamma_2}([\tau_2])$ corresponds to the morphism $C^*_r(\Gamma_1)\to C^*_r(\Gamma_2)\otimes N_2\overline \otimes N_1$, given by $\delta_{g_1}\mapsto \delta_{\varphi({g_1})}\otimes j_2(\delta_{\varphi({g_1})})\otimes j_1(\delta_{g_1})$. But the map $\delta_{g_1}\mapsto j_2(\delta_{\varphi({g_1})})\otimes j_1(\delta_{g_1}) $ is a trace preserving inclusion of $C^*_r\Gamma_1$ into the $\rm{II}_1$-factor  $N_2\overline\otimes N_1$, and thus $[\varphi]_r\otimes J_r^{\Gamma_2}([\tau_2])=[\varphi]_r$.
\end{itemize}

Hence we have shown

\begin{prop}
\label{naturality}
A morphism of discrete groups $\varphi:\Gamma_1 \to \Gamma_2$ naturally induces a linear map 
$$
K_{*,\R}(C_r^*\Gamma_1)_\tau\longrightarrow K_{*,\R}(C_r^*\Gamma_2)_\tau\ .
$$

\end{prop}

\subsection{The $\tau$-Baum--Connes map}  
\subsubsection{Classifying spaces}\label{Classspace}
Let $\Gamma$ be a discrete group and $A$ a $\Gamma$-algebra.
The group $K_{*}^{\textrm{top}}(\Gamma; A)$ is defined \cite{BCH} by $$
K_{*}^{\textrm{top}}(\Gamma; A)=\varinjlim_{Y }KK_*^\Gamma(C_0(Y), A).
$$
In this direct limit of Kasparov equivariant homology groups, all the proper cocompact   $\Gamma$-invariant subspaces $Y\subset \underline E\Gamma$ of the classifying space for proper actions $\underline{E}\Gamma$ are taken into account. 

The universal property of $\underline E\Gamma$ ensures that every proper $\G$-space $Z$ has a $\Gamma$-map to $\underline E\Gamma$ which is unique up to homotopy. {Therefore $K_{*}^\Gamma(\underline E\Gamma; A)$ is the limit of the inductive system $KK_{*}^\Gamma(C_0(Z), A)$ where $Z$ runs over proper  and cocompact $\Gamma$-spaces.} 
We can therefore use the following notation: for a pair $(Y, y)$ where $Y$ is a proper and cocompact $\Gamma$-space and $y\in KK^\Gamma(C_0(Y), A)$, we denote by $\llbracket \, Y, y\, \rrbracket$ its associated class in the inductive limit.

Concerning real coefficients, we define:
$$
K_{*,\mathbb{R}}^{\textrm{top}}(\Gamma; A)=\varinjlim_{\substack{Y\subset \underline E\Gamma\\ Y \;\Gamma\text{-compact}}} KK_{*,\R}^\Gamma(C_0(Y), A).
$$
Notice this is an iterated limit, first over factors, then on subsets $Y$. It exists because $Y \longmapsto KK^{\Gamma}_{\R}(C_0(Y),A)$ is a directed system with values groups. Also by Remark 1.6 (4) in \cite{AAS2}, the opposite iterated limit $$\varinjlim_N \varinjlim_{\substack{Y\subset \underline E\Gamma\\ Y \;\Gamma\text{-compact}}} KK_{*}^\Gamma(C_0(Y), A\otimes N)=\varinjlim_N  K_{*}^{\operatorname{top}}(\Gamma,A\otimes N).
$$
exists. Here $N$ ranges over a fixed space of $\rm{II}_1$-factors acting on a separable Hilbert space and with morphisms $N \longrightarrow M$ which are trace preserving embeddings. We can show that these two limits coincide and give a well defined group 
$K^{\operatorname{top}}_{*,\R}(\Gamma;A)$. Indeed we are just taking the limits in the two entries of the covariant bifunctor $(Y,N) \mapsto KK^\Gamma(C_0(Y), A\otimes N)$ defined on the corresponding product category. Generalising the notation above, we are thus allowed to represent elements in $K_{*,\R}^{\operatorname{top}}(\Gamma;A)$ as classes $\llbracket \, Y, y\, \rrbracket $ with $Y$ proper $\Gamma$-compact $\Gamma$-space  and $y \in KK^\Gamma(C_0(Y),A \otimes N)$.

\subsubsection{The Baum--Connes map with real coefficients}  
Let $A$ a be $\Gamma$-algebra. Recall that the Baum--Connes assembly map $\mu^A: K^{\textrm{top}}_*(\Gamma;A)\longrightarrow K_{*}(A\rtimes_r \Gamma)$ assigns to $\llbracket \,Y, \xi\,\rrbracket \in K^{\textrm{top}}_*(\Gamma;A) $, as above the element
$$
\mu^A(\llbracket \,Y, \xi\,\rrbracket)=p_Y \otimes j_r^\Gamma (\xi)
$$ 
where $p_Y\in KK(\C, C_0(Y)\rtimes_r \Gamma)$ is the Kasparov projector. 

The map $\mu^A$ factors through the $K$-theory of the maximal crossed product $K_{*}(A\rtimes \Gamma)$. 
We will sometimes write just $\mu$ instead of $\mu ^A$ if $A=\C$ or when there is no ambiguity on the $\Gamma$-algebra $A$.

\medskip
Replacing $A$ with $A\otimes N$ for a $\rm{II}_1$-factor $N$ (with trivial $\G$-action), and using the isomorphism 
$(A\otimes N) \rtimes_r \Gamma \simeq (A\rtimes_r \Gamma )\otimes N$ defines a collection of assembly maps
$K^{\operatorname{top}}_{*}(\Gamma; A\otimes N) \longrightarrow K_*((A\rtimes_r \Gamma)\otimes N)$
passing to the limit to an assembly map in $K_{\R}$:
\begin{equation}\label{realassemby}
	\mu_{\R}:K^\textrm{top}_{*,\R}(\Gamma;A) \longrightarrow K_{\R}(A \rtimes_r \Gamma).
\end{equation}
By construction $\mu_{\R}$ is formally defined by the same recipe as $\mu.$
\begin{lemma}\label{muR}
The assembly map $\mu_{\R}$ is given by first applying the real reduced descent map and then taking the $KK_{\R}$-product with $p_Y\in KK(\C, C_0(Y)\rtimes_r \Gamma)$ followed by the direct limit on $Y$:
$$
\xymatrix{KK^{\Gamma}_{\R}(C_0(Y);A)\ar[r]^-{J^\Gamma_r} &KK_{\R}(C_0(Y)\rtimes_r \G;A \rtimes_r \Gamma)\ar[r]^-{p_Y \otimes \cdot} &KK_{\R}(\C;A \rtimes \Gamma)}\ .
$$
\end{lemma}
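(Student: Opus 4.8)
The plan is to unwind both sides of the asserted equality down to the level of a fixed $\Gamma$-compact subset $Y\subset\underline E\Gamma$ and a fixed $\mathrm{II}_1$-factor $N$, to observe that they are assembled from the same classical $KK$-ingredients, and then to check that the two prescriptions pass to the same iterated inductive limit (first over $N$, then over $Y$). Fix such $Y$ and $N$, a class $x\in KK^\Gamma_\R(C_0(Y),A)$ represented by some $\xi\in KK^\Gamma(C_0(Y),A\otimes N)$, and write $\theta_N\colon (A\otimes N)\rtimes_r\Gamma\xrightarrow{\ \simeq\ }(A\rtimes_r\Gamma)\otimes N$ for the canonical $C^*$-isomorphism. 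Recall that for \emph{reduced} crossed products the vertical arrows in the analogue of the square \eqref{crosstensor} are precisely these isomorphisms $\theta_N$, that they are compatible with the embeddings $N\hookrightarrow M$ of factors, and that this is exactly the data used to define the real reduced descent $J^\Gamma_r$ of \eqref{eq:desc_red}.

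First I would spell out $\mu_{\R}$. By its very construction in \eqref{realassemby}, it is obtained by applying the honest assembly map $\mu^{A\otimes N}$, transporting along $\theta_N$, and passing to the limit; thus on our representative
\[
\mu_{\R}\big(\llbracket Y,x\rrbracket\big)=\varinjlim_{Y}\varinjlim_{N}\ (\theta_N)_*\big(p_Y\otimes_{C_0(Y)\rtimes_r\Gamma} j_r^\Gamma(\xi)\big)\in KK(\C,(A\rtimes_r\Gamma)\otimes N),
\]
where $j_r^\Gamma$ is the classical reduced descent and $p_Y\in KK(\C,C_0(Y)\rtimes_r\Gamma)$ the Kasparov projector. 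Next I would spell out the right-hand side of the lemma. Unwinding \eqref{eq:desc_red}, the image of $x$ under $J^\Gamma_r$ is represented at level $N$ by $(\theta_N)_*\,j_r^\Gamma(\xi)\in KK\big(C_0(Y)\rtimes_r\Gamma,(A\rtimes_r\Gamma)\otimes N\big)$. Since $p_Y$ is an ordinary (non‑real) $KK$-class, the $KK_\R$-product $p_Y\otimes(-)$ is, by the definition of the Kasparov product in $KK^G_\R$ given in §\ref{sec:realKK} and of the inhomogeneous product $KK^G(A,B)\otimes KK^G_\R(C,D)\to KK^G_\R(A\otimes C,B\otimes D)$, simply the classical product $p_Y\otimes_{C_0(Y)\rtimes_r\Gamma}(-)$ followed by the limit over $N$. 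Hence the right-hand side sends $\llbracket Y,x\rrbracket$ to $\varinjlim_{Y}\varinjlim_{N}\ p_Y\otimes_{C_0(Y)\rtimes_r\Gamma}\big((\theta_N)_*\,j_r^\Gamma(\xi)\big)$.

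Comparing the two expressions, the lemma reduces to the identity
\[
p_Y\otimes_{C_0(Y)\rtimes_r\Gamma}\big((\theta_N)_* w\big)=(\theta_N)_*\big(p_Y\otimes_{C_0(Y)\rtimes_r\Gamma} w\big),\qquad w=j_r^\Gamma(\xi),
\]
which is nothing but the functoriality of the classical Kasparov product with respect to the $*$-isomorphism $\theta_N$ acting on the second leg, together with the bilinearity and associativity of the $KK$-product. This is purely formal, so I do not expect a genuine obstacle; the only points needing a line of care are that $\theta_N$ is a well-defined isomorphism compatible with the inductive system of $\mathrm{II}_1$-factors and with the reduced descent — and both are already built into the construction of $J^\Gamma_r$. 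In other words, the "hard" part is not an estimate or a new construction but simply the bookkeeping: keeping the two inductive limits (over $Y$ and over $N$) in the correct order and invoking the naturality statements of §\ref{sec:realKK} in the order just described; one also records, as in the body, that the resulting class can equally be pushed forward to $K_\R$ of the maximal crossed product.
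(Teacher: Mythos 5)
Your proposal is correct and follows essentially the paper's own route: the paper likewise unwinds \eqref{realassemby}, the definition of the reduced descent \eqref{eq:desc_red} and of the $KK_{\R}$-product, and concludes by interchanging the two inductive limits, $\mu_{\R}=\varinjlim_N \varinjlim_Y p_Y\otimes J^\Gamma_r=\varinjlim_Y p_Y\otimes\varinjlim_N J^\Gamma_r$, exactly as you do. Your explicit fixed-$(Y,N)$ verification via the isomorphism $\theta_N$ and naturality of the Kasparov product is just the detail the paper leaves implicit under ``self-explanatory notation''.
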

\begin{proof}
We know that we can interchange the limits in our definition of $K^\textrm{top}_{\R}(\Gamma;A)$. Then the Lemma follows immediately 
by the definition of the descent morphism in $KK_{\R}$ (\ref{eq:desc_max}) and of the $KK_{\R}$-Kasparov product. Indeed starting from \eqref{realassemby}, and with self-explanatory notation:
$$\mu_{\R} = \varinjlim_N \mu^{A \otimes N} = \varinjlim_N \varinjlim_Y p_Y \otimes J^\Gamma_r =\varinjlim_Y  p_Y \otimes  \varinjlim_N  J^\Gamma_r\ .
$$
\end{proof}

\subsubsection{The $\tau$-Baum--Connes map}  
We now define a map 
$\mu_\tau: K^{\textrm{top}}_{\R,*}(\Gamma;A)_{\tau}\longrightarrow  K_{\R,*}(A\rtimes_r \Gamma)_{\tau}$ between the corresponding $\tau $-parts of the real $K$-theory.

Let us remark that if $A,B$ are $\Gamma$-algebras,  $z\in K^{\textrm{top}}_{\R,*}(\Gamma;A)$ and $y\in KK^\Gamma_\R(A,B)$ then we have  $\mu^B_\R(z\otimes y)=\mu^{A}_{\R}(z)\otimes J_r^\Gamma(y)$.
Taking $B=A$, we find that the real assembly map $\mu^A_{\R}$ is $KK^\Gamma_\R(A,A)$ linear from the right $KK^\Gamma_\R(A,A)$-module $K^{\textrm{top}}_{\R,*}(\Gamma;A)$ to the  right $KK^\Gamma_\R(A,A)$-module $KK_\R(\C,A\rtimes_r \Gamma)$ (where $KK^\Gamma_\R(A,A)$ acts via the ring morphism $J^\Gamma_r:KK^\Gamma_\R(A,A)\to KK_\R(A\rtimes_r \Gamma,A\rtimes_r \Gamma)$.
In particular, using the morphism $y\mapsto 1_A\otimes y$ from $KK^\Gamma_\R(\C,\C)\to KK^\Gamma_\R(A,A)$, it follows that $\mu^A_\R$ is $KK^\Gamma_\R(\C,\C)$-linear.

We therefore find a map $\mu_{\tau}$ filling the commutative diagram:
$$\xymatrix{K^{\textrm{top}}_{*,\R}(\Gamma;A)\ar[d]_{[\tau]} \ar[r]^-{\mu_{\R}}& K_{\R}(A \rtimes_r \Gamma)\ar[d]^{[\tau]_r}\\
K^{\textrm{top}}_{*,\R}(\Gamma;A)_{\tau} \ar[r]^{\mu_{\tau}}&K_{\R}(A\rtimes_{r}\Gamma)_{\tau}.  }$$
Here on the right vertical arrow there is the product with $[\tau]_r:=J_r^\Gamma(1_A\otimes [\tau])$.
It is straightforward to check directly the existence of $\mu_{\tau}$; indeed using Lemma \ref{muR} we compute for every $z \in K_{*,\R}^{\operatorname{top}}(\Gamma; A)$ represented as a class $\llbracket Y,y \rrbracket$ with $y \in KK_{\R}^{\Gamma}(C_0(Y),A)$:
$$\mu_{\R}(z) \otimes [\tau]_r = p_Y \otimes J^{\Gamma}_r (y  \otimes [\tau]) = p_Y \otimes J^{\Gamma}_r (  [\tau] \otimes y ) =  \mu_{\R}([\tau]\otimes z)\ .$$ 
It follows that $\mu_{\R}$ descends to a map on the $\tau$-parts simply given by:
$$
\mu_\tau(x \otimes [\tau]):=\mu_{\R}(x) \otimes [\tau]_r, \quad x \in K^{\textrm{top}}_{*,\R}(\Gamma;A).
$$

\begin{definition} 
\label{def:tauBC}%
We call $\mu_\tau: K^{\textrm{top}}_{*,\R}(\Gamma;A)_{\tau}\longrightarrow  K_{*,\R}(A\rtimes_r \Gamma)_{\tau}$ the \emph{$\tau$-Baum--Connes map}.
\end{definition}

We can state the following $\tau$-form of the Baum--Connes conjecture with coefficients:
\begin{conjecture}
The map $\mu_\tau: K^{\textrm{top}}_{*,\R}(\Gamma;A)_{\tau}\longrightarrow   K_{*,\R}(A\rtimes_r \Gamma)_\tau$ is bijective.
\end{conjecture}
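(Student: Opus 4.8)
\medskip
\noindent\emph{Strategy.} The natural route to the conjecture is the Dirac--dual-Dirac ($\gamma$-element) method of Kasparov, transported to the $\tau$-localised real-coefficient setting; the plan is to treat injectivity and surjectivity of $\mu_\tau$ separately, after first cutting the left-hand side down to its free and proper part. Concretely, I would begin by using the identification of $K^{\textrm{top}}_{*,\R}(\Gamma;A)_\tau$ with $KK^\Gamma_\R(E\Gamma,A)$ announced in the introduction, obtained from the compact $\Gamma$-space $X$ on which each torsion subgroup acts freely and with a $\Gamma$-invariant probability measure. This presents $\mu_\tau$ as an assembly map built only from free and proper $\Gamma$-spaces $Y$, precisely the regime in which $[\tau]$ acts as a genuine unit on $C_0(Y)$ and in which the Kasparov projector $p_Y$ and the reduced descent $J^\Gamma_r$ are best behaved.

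For injectivity I would produce a dual-Dirac element. Starting from a classical $\gamma$-element $\gamma\in KK^\Gamma(\C,\C)$ factoring as $\gamma=\eta\otimes_P D$ through a proper $\Gamma$-algebra $P$ (available for every coarsely embeddable group), I would push $\gamma$ forward along the change-of-coefficients map $KK^\Gamma(\C,\C)\to KK^\Gamma_\R(\C,\C)$ and multiply by $[\tau]$. Since $[\tau]$ is a central idempotent (Lemma \ref{commute} together with its idempotence), the element $\gamma\otimes[\tau]$ is the relevant operator, and its factorisation through the proper algebra $P$ --- where the real assembly map is already an isomorphism --- yields a one-sided inverse to $\mu_\tau$. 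Combined with the functoriality of the $\tau$-parts established in Section \ref{functorial}, this gives split injectivity of $\mu_\tau$; this is the direction compatible with Theorem \ref{theorem:inj-novikov}, where injectivity of $\mu_\tau$ is shown to force rational injectivity of the analytic assembly map.

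Surjectivity is the hard part and carries the genuine content of the conjecture. It would follow from the equality $\gamma\otimes[\tau]=[\tau]$ in $KK^\Gamma_\R(\C,\C)_\tau$, i.e. from the $\gamma$-element acting as the identity on the $\tau$-parts. This equality is available whenever $\gamma=1$ classically --- for a-T-menable groups by Higson--Kasparov, and with arbitrary coefficients for hyperbolic groups by Lafforgue \cite{LafforgueHyperbolic} --- in which case one may alternatively deduce bijectivity from the classical conjecture via Theorem \ref{thm:BCImpliesBCtau}, using that $\mu_\tau$ does not distinguish the full and reduced crossed products (Proposition \ref{prop:redmax}). For a general group, however, there is no mechanism forcing $\gamma\otimes[\tau]=[\tau]$: the essential obstruction is the failure of half-exactness of the functor $A\mapsto K_{*,\R}(A\rtimes_r\Gamma)_\tau$. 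Indeed, as shown in Section \ref{sec:counterex}, for a Gromov monster group the three-term sequence of $\tau$-parts displayed in the introduction is not exact in the middle, so no $\gamma$-element argument can restore surjectivity and $\mu_\tau$ cannot be bijective for every $\Gamma$-algebra. I therefore expect the provable statement to be bijectivity of $\mu_\tau$ for the class of groups admitting a $\gamma$-element equal to $1$, with this exactness failure as the decisive obstacle to the conjecture in full generality.
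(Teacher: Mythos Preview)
The statement you are addressing is not a theorem but a \emph{conjecture}: the paper neither proves it nor attempts to. In fact the paper explicitly disproves it in full generality --- Section~\ref{sec:counterex} constructs a $\Gamma$-algebra (for $\Gamma$ a Gromov monster) for which $\mu_\tau$ cannot be an isomorphism (Proposition~\ref{HLSexample} and the remarks following it). So there is no ``paper's own proof'' to compare against; the paper's contribution is rather (i) the implication $\text{BC for }A\otimes N \text{ for all II}_1\text{-factors }N \Rightarrow \tau\text{-BC for }A$ (Theorem~\ref{thm:BCImpliesBCtau}), and (ii) the counterexamples.

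Your proposal is not a proof but a correct diagnosis of the situation, and in that sense it matches the paper: you identify that surjectivity is the genuine obstruction, that it is available when a $\gamma$-element equals $1$ (in particular via Theorem~\ref{thm:BCImpliesBCtau} for a-T-menable or hyperbolic groups), and that the exactness failure of Section~\ref{sec:counterex} blocks the general case. Two small differences are worth noting. First, your injectivity strategy via a dual-Dirac element for coarsely embeddable groups is not something the paper pursues; the paper only records that classical BC-injectivity for all $A\otimes N$ forces $\mu_\tau$-injectivity, and separately that $\mu_\tau$-injectivity implies rational injectivity of the analytic assembly map (Theorem~\ref{theorem:inj-novikov}). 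Second, the paper does not phrase surjectivity in terms of $\gamma\otimes[\tau]=[\tau]$; that reformulation is yours, and is reasonable but not established in the text. In short: there is nothing to prove here, and your recognition that the conjecture fails in general is exactly the paper's conclusion.
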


\medskip

\begin{theorem} \label{thm:BCImpliesBCtau}
 Fix a $\G-C^*$-algebra $A$; if the Baum--Connes map $$\mu:K^\textrm{top}_*(\Gamma;A\otimes N) \longrightarrow K_*(A\rtimes_r \Gamma \otimes N)$$ with coefficients in $A\otimes N$ is injective (surjective) for any choice of a $\rm{II}_1$-factor $N$, then\begin{enumerate}
\item   the corresponding $\mu_\R: K^{\textrm{top}}_{*,\R}(\Gamma;A) \longrightarrow K_{\R}(A\rtimes_r \Gamma)$ is injective (surjective);
\item   the corresponding $\mu_\tau: K^{\textrm{top}}_{*,\R}(\Gamma;A)_{\tau}\longrightarrow K_{\R}(A\rtimes_r \Gamma)_\tau$ is injective (surjective). 
\end{enumerate}
\end{theorem}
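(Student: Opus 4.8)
The plan is to deduce (1) from the hypothesis on the maps $\mu^{A\otimes N}$ by a limit argument, and then deduce (2) from (1) by a purely formal argument using that $[\tau]_r$ is an idempotent whose image is the $\tau$-part.

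For (1), recall that by construction $\mu_\R$ is obtained from the family $\{\mu^{A\otimes N}\}_N$ by passing to the inductive limit over $\mathrm{II}_1$-factors $N$ (using the isomorphisms $(A\otimes N)\rtimes_r\Gamma\simeq (A\rtimes_r\Gamma)\otimes N$), and similarly $K^{\textrm{top}}_{*,\R}(\Gamma;A)=\varinjlim_N K^{\textrm{top}}_*(\Gamma;A\otimes N)$ and $K_\R(A\rtimes_r\Gamma)=\varinjlim_N K_*((A\rtimes_r\Gamma)\otimes N)$. Since filtered colimits of abelian groups are exact, a filtered colimit of surjective maps is surjective, and a filtered colimit of injective maps over a directed index is injective; hence if every $\mu^{A\otimes N}$ is injective (resp. surjective) then so is $\mu_\R$. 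The one point to check is that the transition maps in the directed system of $\mathrm{II}_1$-factors intertwine the assembly maps, which is exactly the compatibility built into the definition of $\mu_\R$ (Lemma~\ref{muR}); I would just cite that naturality. I expect this to be entirely routine.

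For (2), I would argue as follows. Consider the commutative square defining $\mu_\tau$, with vertical maps the idempotents $[\tau]$ on the left and $[\tau]_r$ on the right; these are surjections onto the respective $\tau$-parts, and the $\tau$-parts are retracts (the inclusion of the $\tau$-part into the whole group is a section of the projection, since on the $\tau$-part $[\tau]$ acts as the identity). For \emph{surjectivity}: given $\xi\in K_\R(A\rtimes_r\Gamma)_\tau$, by surjectivity of $\mu_\R$ write $\xi=\mu_\R(z)$ for some $z$; then since $\xi$ lies in the $\tau$-part, $\xi=\xi\otimes[\tau]_r=\mu_\R(z)\otimes[\tau]_r=\mu_\tau(z\otimes[\tau])$, so $\mu_\tau$ is surjective. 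For \emph{injectivity}: suppose $x\otimes[\tau]\in K^{\textrm{top}}_{*,\R}(\Gamma;A)_\tau$ satisfies $\mu_\tau(x\otimes[\tau])=0$, i.e. $\mu_\R(x)\otimes[\tau]_r=0$; using $KK^\Gamma_\R(\C,\C)$-linearity of $\mu_\R$ (established just before Definition~\ref{def:tauBC}) this reads $\mu_\R(x\otimes[\tau])=0$, so injectivity of $\mu_\R$ gives $x\otimes[\tau]=0$. Thus $\mu_\tau$ is injective. Note the key identity $\mu_\R(x)\otimes[\tau]_r=\mu_\R(x\otimes[\tau])$ is precisely the computation $p_Y\otimes J^\Gamma_r(y\otimes[\tau])=p_Y\otimes J^\Gamma_r([\tau]\otimes y)=\mu_\R([\tau]\otimes z)$ displayed before the definition, which I would reuse.

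The main obstacle—such as it is—is purely bookkeeping: making sure the identification of $K_{*,\R}(A\rtimes_r\Gamma)_\tau$ as the image of the idempotent $[\tau]_r$ is used consistently with the identification of $K^{\textrm{top}}_{*,\R}(\Gamma;A)_\tau$ as the image of $[\tau]$, and that $\mu_\tau$ as defined by $\mu_\tau(x\otimes[\tau])=\mu_\R(x)\otimes[\tau]_r$ is genuinely the restriction of $\mu_\R$ to these images. Once the commutative square is in place this is immediate, so no real difficulty is expected; the content of the theorem is entirely in the limit argument of (1), which is itself standard.
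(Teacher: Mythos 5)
Your proposal is correct and follows essentially the same route as the paper: part (1) is exactly the paper's argument (the paper just chases elements through the inductive limit over $\rm{II}_1$-factors by hand—every class comes from, and every vanishing is detected at, some finite stage $N$—rather than invoking exactness of filtered colimits), and part (2) is the formal $KK^\Gamma_\R(\C,\C)$-linearity argument that the paper compresses into a single sentence, which you have merely spelled out. No gaps; the only point worth keeping explicit, as you do, is the identity $\mu_\R(x)\otimes[\tau]_r=\mu_\R(x\otimes[\tau])$ identifying $\mu_\tau$ with the restriction of $\mu_\R$ to the $\tau$-parts.
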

\begin{proof} We prove (1). Statement (2) follows since the morphism $\mu_\R$ is $KK_\R^\Gamma(\C,\C)$ linear.

To establish the surjectivity statement just note that every element of $K_{*,\R}(A\rtimes_r \Gamma)$ is the image of an element of $K_*((A\rtimes_r \Gamma) \otimes N)=K_*((A\otimes N)\rtimes_r \Gamma) $ and thus, by surjectivity of $\mu^{A\otimes N}$, from an element in $K_*^{\rm top}(\Gamma;A\otimes N)$.

Let us now establish the injectivity statement. Let $x \in K^{\textrm{top}}_{*,\R}(\Gamma;A)$ be such that $\mu_{\R}^A(x) =0 \in K_{*,\R}(A \rtimes_r \Gamma).$

The element $x$ comes from an element $x_0\in K^{\textrm{top}}_{*}(\Gamma;A\otimes N_0)$. The fact that $x\in \ker \mu_\R$ means that $\mu(x_0)$ is zero in some $K_*((A \rtimes_r \Gamma)\otimes N_2)$, where $N_1\subset N_2$. By injectivity of $\mu^{A\otimes N_2}$ it follows that the image of $x_0\in K_*^{\rm top}(\Gamma;A\otimes N_2)$ is $0$ and therefore $x=0.$
\end{proof}

\section{Action of $[\tau]$ on the classifying space}

\subsection{The group  $K_{*}^\Gamma(E\Gamma; A)$.}

Replacing $\underline E\Gamma$ by the usual classifying space $E\Gamma$ in the definition of $K^{\operatorname{top}}_{*}(\Gamma;A)$ we get the construction of the group  $K_{*}^\Gamma(E\Gamma; A)$: 

\begin{definition}The equivariant $K$-homology with $\Gamma$-compact supports of the standard classifying space for free and proper actions is the group
$$
K_{*}^\Gamma(E\Gamma; A)=\varinjlim_{\substack{Y\subset E\Gamma\\ Y \;\Gamma\text{-compact}}} KK_{*}^\Gamma(C_0(Y), A).
$$
\end{definition}

{In the same way as for $\underline E\Gamma$, the universal property of $E\Gamma$ ensures that every free and proper $\G$-space $Z$ has a $\Gamma$-map to $E\Gamma$ which is unique up to homotopy. Therefore $K_{*}^\Gamma(E\Gamma; A)$ is the limit of the inductive system $KK_{*}^\Gamma(C_0(Y), A)$ where $Y$ runs over free, proper  and cocompact $\Gamma$-spaces.}

{
We denote by $\llangle Y,y\rrangle$ the class in $K_{*}^\Gamma(E\Gamma; A)$ of a pair $(Y, y)$ where $Y$ is a free proper and cocompact $\Gamma$-space and $y\in KK^\Gamma(C_0(Y), A)$.}

\subsection{Atiyah's theorem and $B\Gamma$}
In \cite{AAS} we proved that Atiyah's theorem for a covering space $\widetilde{V} \rightarrow V$ with deck group $\Gamma$ is equivalent to the triviality in $K$-theory of the Mishchenko bundle $\widetilde{V} \times_{\Gamma} N$ constructed  from a morphism $\Gamma \rightarrow U(N)$ into a $\rm{II}_1$-factor $N$ (see also the recent \cite{KP}).
This inspired the definition of \lq\lq $K$-theoretically free and proper\rq\rq (KFP) algebras as the $\G$-algebras where $[\tau]$ acts as the identity \cite{AAS2}.
\begin{definition}For a $\G$-algebra $A$ we say that it has the (KFP) property if the equation $1_A^\G \otimes [\tau]=1_{A,\R}^\G$ holds in $KK^\G_{\R}(A,A)$. 
\end{definition} 
 Here $1_A^\G$ and $1_{A,\R}^{\G}$ are the units of the rings $KK^\G(A,A)$ and $KK^{\G}_{\R}(A,A)$ respectively.
 \begin{remark}
 \label{kfp2} Note that $A$ has the (KFP) property if and only if $KK_{*,\R}^\Gamma(A, A)_{\tau}=KK_{*,\R}^\Gamma(A, A)$. 
 Also, if $A$ has the (KFP) property then by Lemma \ref{commute} we know that $[\tau]$ acts as the unit element also on $KK^{\G}_{\R}(A,B)$ and $KK^{\G}_{\R}(B,A)$ for every $C^*$-algebra $B$. 
 \end{remark}
 The main examples of (KFP) algebras are the free and proper ones in the language of Kasparov \cite{Kas2} (see \cite[Theorem 3.10]{AAS2}).  Of course $C_0(Y)$ for a cocompact free and proper $\G$-space $Y$ is (KFP). Put in another way:
\begin{prop}
\label{prop:atiyah}
Let $A$ be any $\G$ - $C^*$-algebra. The action of $[\tau]$ on $K_{*,\mathbb{R}}^\G(E\G; A)$ is the identity. 
In particular $$K_{*,\mathbb{R}}^\G(E\G;A)_{\tau}=K_{*,\mathbb{R}}^\G(E\G;A)$$
\end{prop}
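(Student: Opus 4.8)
The plan is to reduce the statement to the already-established fact (recalled in Remark~\ref{kfp2} and stated for free and proper $\G$-spaces right before the proposition) that $C_0(Y)$ is a (KFP) algebra whenever $Y$ is a cocompact free and proper $\G$-space, and then to pass to the inductive limit defining $K_{*,\R}^\G(E\G;A)$. Concretely, for each $\G$-compact $Y\subset E\G$ the space $Y$ is free and proper, so by \cite[Theorem 3.10]{AAS2} the algebra $C_0(Y)$ has the (KFP) property, i.e. $1_{C_0(Y)}^\G\otimes[\tau]=1_{C_0(Y),\R}^\G$ in $KK_\R^\G(C_0(Y),C_0(Y))$. By Remark~\ref{kfp2}, this implies that $[\tau]$ acts as the identity on $KK_{*,\R}^\G(C_0(Y),A)$ for every $C^*$-algebra $A$: indeed for $y\in KK_{*,\R}^\G(C_0(Y),A)$ one has $y\otimes[\tau]=(1_{C_0(Y),\R}^\G\otimes y)\otimes[\tau]=(1_{C_0(Y)}^\G\otimes[\tau])\otimes y=1_{C_0(Y),\R}^\G\otimes y=y$, using associativity and centrality of $[\tau]$ (Lemma~\ref{commute}).

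Next I would observe that the action of $[\tau]$ is compatible with the structure maps of the inductive system $\{KK_{*,\R}^\G(C_0(Y),A)\}_{Y\subset E\G}$: for an inclusion $Y\subset Y'$ of $\G$-compact subsets the induced map $KK_{*,\R}^\G(C_0(Y'),A)\to KK_{*,\R}^\G(C_0(Y),A)$ is given by Kasparov product (on the left) with the class of the restriction morphism $C_0(Y')\to C_0(Y)$, hence commutes with Kasparov product on the right by $[\tau]\in KK_\R^\G(\C,\C)$ by associativity. Therefore the endomorphisms ``$\cdot\otimes[\tau]$'' of the groups $KK_{*,\R}^\G(C_0(Y),A)$ assemble into an endomorphism of the inductive limit, which is exactly the action of $[\tau]$ on $K_{*,\R}^\G(E\G;A)$. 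Since each of these endomorphisms is the identity by the previous paragraph, so is the one on the limit. Finally, because $K_{*,\R}^\G(E\G;A)$ is defined as an \emph{iterated} limit (first over $\rm{II}_1$-factors $N$, then over $Y$), one invokes — as in Section~\ref{Classspace} — the fact that these limits may be interchanged and taken in each entry of the bifunctor $(Y,N)\mapsto KK_*^\G(C_0(Y),A\otimes N)$, so that the above argument carried out at each finite stage passes to the full double limit. This proves that $[\tau]$ acts as the identity on $K_{*,\R}^\G(E\G;A)$, and in particular $K_{*,\R}^\G(E\G;A)_\tau=K_{*,\R}^\G(E\G;A)$.

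The only genuine content is \cite[Theorem 3.10]{AAS2} (equivalently the (KFP) property for free and proper actions), which is cited as known; the remaining steps are formal manipulations with the Kasparov product and inductive limits, so I do not expect a serious obstacle. The one point deserving a line of care is the compatibility of the $[\tau]$-action with the structure maps of the inductive system and with the interchange of the two limits — but this is precisely the naturality of the Kasparov product and the bifunctoriality already used to make sense of $K_{*,\R}^\G(E\G;A)$, so it costs nothing beyond a sentence of justification.
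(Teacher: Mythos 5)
Your proposal is correct and follows essentially the same route as the paper: reduce to each cocompact free and proper piece $Y\subset E\G$, invoke the (KFP) property of $C_0(Y)$ from \cite[Theorem 3.10]{AAS2} together with Remark \ref{kfp2}, and pass to the inductive limit. The extra care you take about compatibility with the structure maps and the interchange of limits is implicit in the paper's one-line argument, so there is no substantive difference.
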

\begin{proof}
The $K$-homology group $K_{*,\mathbb{R}}^\G(E\G;A)$ is defined as a limit over all the cocompact $\G$-invariant pieces. It is thus sufficient to prove that for every cocompact free and proper $\G$-space $Y$ the element $[\tau]$ acts as the identity on $KK^{\G}_{\R}(C_0(Y),A)$. This is true because $C_0(Y)$ is (KFP) and by Remark \ref{kfp2}.
\end{proof}


\section{Comparison between  $K^\Gamma_{*,\R}(E\Gamma;A) $ and $K^{\operatorname{top}}_{*,\R}(\Gamma;A)$}

In this section we use $[\tau]$ to compare the $K$-homology of $E\Gamma$ with the one of $\underline{E}\Gamma$ at the level of real coefficients. In particular for any $\Gamma$-algebra $A$ we show that the natural  map $\sigma:E\Gamma \longrightarrow \underline{E}\Gamma$ induces an isomorphism 
$$
\xymatrix{ K^\Gamma_\R(E\Gamma; A) \ar[r]^-{\sigma_*}_{\simeq\;\;}&  K^{\textrm{top}}_{*,\R}(\Gamma;A)_{\tau}}.
$$
In the case $A=\C$  this will be used in Section \ref{sec:Novikov} and implies an isomorphism
$$
\xymatrix{K_*(B\Gamma)\otimes \R \ar[r]^-{}_{\;\;\simeq}&  K^{\operatorname{top}}_{*,\R}(\Gamma)_{\tau}}.
$$
{Note that $\sigma_*\big(\llangle Y,y\rrangle\big)=\llbracket Y,y\rrbracket$ for every free, proper cocompact $\Gamma$-space $Y$ and every $y\in KK_{*,\R}^\Gamma(C_0(Y), A)$.}

We construct the inverse of $\sigma_*$ using any compact probability space where $\Gamma$ acts probability measure preserving (p.m.p.) and sufficiently freely.

 \begin{prop}
\label{prop:measure}
Let $(X,m)$ be any compact p.m.p.\ $\Gamma$-space. The invariant measure $m$ defines a dual trace $t_{m} : C(X)\rtimes \Gamma \longrightarrow \C$ hence an element  $[t_{m}] \in KK^\Gamma_{\R}(C(X),\C)$. We have  $i_X^*[t_m]=[\tau]$ where $i_X: \C\hookrightarrow C(X)$ is the unital inclusion and $[t_m]\otimes [\tau]= [t_m]$.
\end{prop}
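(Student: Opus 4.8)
The plan is to unwind both assertions using the trace-class picture of $[\tau]$ from Section~\ref{traceKKR} and the compatibility of descent with tensoring by trivial-action algebras (diagram~\eqref{crosstensor}). First I would describe the dual trace: given the invariant probability measure $m$ on $X$, integration against $m$ is an invariant tracial state on $C(X)$, so by Remarks~\ref{Remtraces}(2) the crossed product $C(X)\rtimes\Gamma$ carries the dual trace $t_m$ determined by $t_m(f u_e)=\int_X f\,dm$ and $t_m(f u_\gamma)=0$ for $\gamma\neq e$; embedding $C(X)\rtimes\Gamma$ trace-preservingly into a $\mathrm{II}_1$-factor gives $[t_m]\in KK_\R(C(X)\rtimes\Gamma,\C)=KK_\R^\Gamma(C(X),\C)$ (using Remarks~\ref{Remtraces}(3)), as claimed in the statement.

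For the identity $i_X^*[t_m]=[\tau]$: the inclusion $i_X\colon\C\hookrightarrow C(X)$ induces $\C\rtimes\Gamma=C^*\Gamma\hookrightarrow C(X)\rtimes\Gamma$, and pulling back the dual trace $t_m$ along this inclusion gives precisely the tracial state $\gamma\mapsto t_m(u_\gamma)$ on $C^*\Gamma$, which equals $1$ at $e$ and $0$ elsewhere — that is, exactly the group trace $\mathrm{tr}_\Gamma$. Since $[\tau]\in KK_\R^\Gamma(\C,\C)=KK_\R(C^*\Gamma,\C)$ is by definition the class of $\mathrm{tr}_\Gamma$, functoriality of the trace-to-$KK$ construction under the morphism $i_X\colon C^*\Gamma\to C(X)\rtimes\Gamma$ yields $i_X^*[t_m]=[\tau]$; concretely this is the statement that restricting the $\mathrm{II}_1$-factor embedding of $C(X)\rtimes\Gamma$ to $C^*\Gamma$ is still trace preserving for $\mathrm{tr}_\Gamma$.

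The relation $[t_m]\otimes[\tau]=[t_m]$ then follows from the absorption property of the group trace established in Remarks~\ref{Remtraces}(4): write $[t_m]\otimes[\tau]=[t_m]\otimes i_X^*[t_m]$, and since $i_X^*[t_m]=[\tau]$ corresponds to the trace $\mathrm{tr}_\Gamma$ on $C^*\Gamma$ while $[t_m]$ factors (by the coproduct computation underlying Remarks~\ref{Remtraces}(4)) through a trace on $C(X)\rtimes\Gamma$ whose "group part" is absorbed by $\mathrm{tr}_\Gamma$, one gets $[\sigma]\otimes[\mathrm{tr}_\Gamma]=[\sigma]$ for $\sigma=t_m$. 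Equivalently, viewing $[\tau]$ as central and idempotent in $KK_\R^\Gamma$, one has $[t_m]\otimes[\tau]=[t_m]\otimes i_X^*[\tau]$... more cleanly: $[t_m] = [t_m]\otimes 1 = [t_m]\otimes i_X^*(1_{C(X)})$ and the multiplicativity of traces under the coproduct gives $t_m\cdot\mathrm{tr}_\Gamma = t_m$ because $t_m$ is supported on the unit of $\Gamma$ inside $C(X)\rtimes\Gamma$.

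I expect the main obstacle to be purely bookkeeping rather than conceptual: one must check carefully that the various identifications $KK_\R^\Gamma(C(X),\C)\cong KK_\R(C(X)\rtimes\Gamma,\C)$, the passage to the inductive limit over $\mathrm{II}_1$-factors, and the descent/coproduct picture of $[\tau]$ are all mutually compatible, so that "the trace $t_m$ restricted to $C^*\Gamma$ is $\mathrm{tr}_\Gamma$" genuinely translates into the $KK_\R$-level equality $i_X^*[t_m]=[\tau]$. Once the dictionary between tracial states and their $KK_\R$-classes (functorial in $*$-homomorphisms, multiplicative under coproducts) is in place — which is essentially Remarks~\ref{Remtraces} — both claims are immediate.
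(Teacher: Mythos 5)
Your proposal is correct and follows essentially the same route as the paper: the class $[t_m]$ comes from Remarks \ref{Remtraces}, the identity $i_X^*[t_m]=[\tau]$ from the fact that the dual trace restricts to $\mathrm{tr}_\Gamma$ on $C^*\Gamma$ (i.e.\ $t_m\circ i_X=\tau$), and the absorption $[t_m]\otimes[\tau]=[t_m]$ from the trace-multiplicativity under the coaction $\delta_X:C(X)\rtimes\Gamma\to (C(X)\rtimes\Gamma)\otimes C^*\Gamma$, since $(t_m\otimes\tau)\circ\delta_X=t_m$ because $t_m$ is a dual trace. The detours in your last paragraph (rewriting $[\tau]$ as $i_X^*[t_m]$, or inserting $i_X^*(1_{C(X)})$) are unnecessary noise; the clean statement is exactly the coaction computation you reach at the end, which is what the paper records.
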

\begin{proof}
The first point is an immediate consequence of Remark \ref{remarksKKG}. By definition of a dual trace, we have $t_m\circ i_X=\tau$. Finally, $[t_m]\otimes [\tau]$ is the class of the trace $(t_m\otimes \tau)\circ \,\delta_X$ on $C(X)\rtimes \Gamma $, where $\delta_X:C(X)\rtimes \Gamma \to (C(X)\rtimes \Gamma )\otimes C^*\Gamma$ is the coaction. Obviously $(t_m\otimes \tau)\circ \delta_X=t_m$ since $t_m$ is a dual trace.
\end{proof}

\medskip

\subsection{Property (TAF)}
 \begin{definition}
A compact space $X$ with an action of $\Gamma$ such that every torsion element acts freely is said to have property (TAF),  \emph{torsion acts freely}.
\end{definition}

Note that if $X$ is (TAF) and $Y$ is a proper $\Gamma$-space, then $Y\times X$ is free and proper with respect to the diagonal action. We use this simple remark to map naturally $K^{\textrm{top}}_{*,\R}(\Gamma;A) $ to the \lq\lq free and proper part\rq\rq $\;K_{*, \R}^\Gamma(E\Gamma; A)$, for every $\Gamma$-algebra $A$.
\begin{prop}\label{prop5.3}
 Let $(X,m)$ be any compact p.m.p.\ $\Gamma$-space with (TAF) property, and $[t_m]\in KK^\Gamma_{\R}(C(X),\C)$ the class constructed above.  
\begin{enumerate}
\item The  product by $[t_m]$ induces a well-defined map 
\begin{equation}
\label{eq:t_Xmap}
t_{(X,m)}:K^{\textrm{top}}_{*,\R}(\Gamma;A)\longrightarrow K_{*,\R}^\Gamma(E\Gamma; A),
\end{equation}
given at the level of cycles by $t_{(X,m)}(\llbracket\, Y, y\,\rrbracket)= \llangle \,Y\times X, y\otimes [t_m]\,\rrangle$ {\rm(}where $Y$ is a proper $\Gamma$-compact space and $y\in KK^\Gamma_\R(C_0(Y),A)${\rm)}.
\item The composition $t_{(X,m)}\circ \sigma_*$ is the identity of $K_{*,\R}^\Gamma(E\Gamma; A)$.
\item The map $t_{(X,m)}$ does not depend on the choice of $(X,m)$. We will denote it by $t$.
\end{enumerate}
 
\end{prop}

\begin{proof} 
Let $\llbracket \, Y, y\, \rrbracket\in K_{*,\R}^{\textrm{top}}(\Gamma; A)$. 
Then $y \otimes [t_m]\in KK^\Gamma_{\R}(C_0(Y\times X),A)$ and $Y \times X$ maps to $E\Gamma$ because it is free and proper.

If $Z$ is another proper $\Gamma$-compact space and $f:Y\to Z$ is a $\Gamma$-equivariant map then $(f\times \operatorname{id}_X)_*(y\otimes [t_m])= f_*y\otimes [t_m]$. This shows that $t_{(X,m)}$ is well defined in the limit. 

If the action of $\Gamma$ on $Y$ is free and proper, using the map of free and proper $\Gamma$-spaces $q:Y\times X\to Y$, it follows that the class $ \llangle \,Y\times X, w\,\rrangle$ in $K_{*,\R}^\Gamma(E\Gamma; A)$ of an element $w\in KK_{*,\R}^\Gamma(C_0(Y\times X), A)$ is equal to $ \llangle \,Y, q_*(w)\,\rrangle$. In particular, $\llangle \,Y\times X, y\otimes [t_m]\,\rrangle=\llangle \,Y, y\otimes i_X^*[t_m]\,\rrangle=\llangle \,Y, y\otimes [\tau]\rrangle=\llangle \,Y, y\rrangle$. Thus (2) follows.

Given $(X_1,m_1)$ and $(X_2,m_2)$, put $X=X_1\times X_2$ and $m=m_1\times m_2$. Then, for every cycle $[Y,y]$, we have $\llangle Y\times X,y\otimes [t_m]\rrangle =t_{(X_2,m_2)}(\llbracket Y\times X_1,y\otimes [t_{m_1}]\rrbracket )$. Since the action of $\Gamma$ on $Y\times X_1$ is free and proper, it follows that $t_{(X_2,m_2)}(\llbracket Y\times X_1,y\otimes [t_{m_1}]\rrbracket )=\llangle Y\times X_1,y\otimes [t_{m_1}]\rrangle $ from (2), \emph{i.e.} $t_{(X,m)}(\llbracket Y,y\rrbracket)=t_{(X_1,m_1)}(\llbracket Y,y\rrbracket)$.

In the same way $t_{(X,m)}(\llbracket Y,y\rrbracket)=t_{(X_2,m_2)}(\llbracket Y,y\rrbracket)$.
\end{proof}

We will show below (Theorem \ref{tafsection}) that every discrete countable $\Gamma$ admits a compact (TAF) p.m.p.\ space. 

\medskip In the next theorem we are going to use this space.
Let $\sigma_{*}: K^{\Gamma}_{*,\R}(E\Gamma; A)\to K_{*,\R}^{\textrm{top}}(\Gamma; A)$ be induced by the natural map $\sigma: E\Gamma\to \underline E\Gamma$.

\begin{theorem} 
\label{th:sigmat}
The  morphism $t$ is a left inverse of $\sigma_*$. More precisely
\begin{enumerate}
\item $t\circ \sigma_*=\tau$ and $\sigma_{*}\circ t=\tau$, \textit{i.e}.\ the following diagram commutes 
$$
\xymatrix{
K_{*,\R}^{\G}(E\Gamma;A)\ar[r]^{\sigma_*}\ar[d]_{\tau=\id} & K_{*,\R}^{\operatorname{top}}(\Gamma;A)\ar[d]^\tau\ar[ld]^t\\
K_{*,\R}^{\G}(E\Gamma;A)\ar[r]_{\sigma_{*}}  & K_{*,\R}^{\operatorname{top}}(\Gamma;A)
}
 $$
\item As a consequence
we have an induced isomorphism
\begin{equation}
\label{eq:sigmaiso}
\sigma_*:\xymatrix{ K^\Gamma_{*,\R}(E\Gamma; A) \ar[r]^{{\scalebox{1.5}[1]{$\simeq\:\:$}}}&  K^{\textrm{top}}_{*,\R}(\Gamma;A)_{\tau}\ .
}
\end{equation}
\end{enumerate}
\end{theorem}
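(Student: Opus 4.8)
The plan is to prove Theorem \ref{th:sigmat} by establishing the two identities in part (1) and then deducing part (2) formally. For the identity $\sigma_* \circ t = \tau$ (acting on $K_{*,\R}^{\operatorname{top}}(\Gamma;A)$): take a class $\llbracket Y, y \rrbracket$ with $Y$ proper $\Gamma$-compact and $y \in KK_\R^\Gamma(C_0(Y),A)$. By definition $t(\llbracket Y, y \rrbracket) = \llbracket Y \times X, y \otimes [t_m] \rrbracket$, and applying $\sigma_*$ just means viewing $Y \times X$ inside $\underline E\Gamma$ via the projection $Y \times X \to Y \to \underline E\Gamma$. So I need to identify $\llbracket Y \times X, y \otimes [t_m] \rrbracket$ with $\tau \cdot \llbracket Y, y \rrbracket = \llbracket Y, y \otimes [\tau] \rrbracket$ in the inductive limit over $\underline E\Gamma$. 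The key point is that the two cycles are related by the proper map $\pi = \mathrm{pr}_1 \times \mathrm{id}: Y \times X \to Y$ (which is $\Gamma$-equivariant since $X$ is compact, hence $\pi$ is proper and cocompact-compatible), and pushing forward: $\pi_*(y \otimes [t_m]) = y \otimes (i_X^*[t_m])$ where $i_X : \C \hookrightarrow C(X)$ corresponds to $\pi^*: C_0(Y) \to C_0(Y\times X) = C_0(Y)\otimes C(X)$. By Proposition \ref{prop:measure}, $i_X^*[t_m] = [\tau]$, so $\pi_*(y \otimes [t_m]) = y \otimes [\tau]$, and since $\pi$ commutes with the maps to $\underline E\Gamma$, the two classes agree in the limit. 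Hence $\sigma_* \circ t = \tau$.

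For the identity $t \circ \sigma_* = \tau = \id$ on $K_{*,\R}^\Gamma(E\Gamma;A)$: given $\llbracket Y, y \rrbracket$ with $Y \subset E\Gamma$ already free and proper, $\sigma_*$ just regards it inside $\underline E\Gamma$, and then $t$ produces $\llbracket Y \times X, y \otimes [t_m] \rrbracket$, which now lives again in the $E\Gamma$-limit since $Y \times X$ is free and proper. Again using the proper projection $\pi : Y \times X \to Y$, which is now a $\Gamma$-map between free and proper spaces, the same push-forward argument gives $\pi_*(y \otimes [t_m]) = y \otimes [\tau]$ in $K_{*,\R}^\Gamma(E\Gamma;A)$; but by Proposition \ref{prop:atiyah} the element $[\tau]$ acts as the identity on $K_{*,\R}^\Gamma(E\Gamma;A)$, so $y \otimes [\tau] = y$ in the limit. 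Hence $t \circ \sigma_* = \tau = \id$. Together with the previous paragraph this fills in the commutative diagram of part (1).

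Part (2) follows formally: from $t \circ \sigma_* = \id$ we get that $\sigma_*$ is injective and $t$ is surjective; from $\sigma_* \circ t = \tau$ (the action of the idempotent $[\tau]$ on $K_{*,\R}^{\operatorname{top}}(\Gamma;A)$) we see that the image of $\sigma_*$ contains the image of the projector $[\tau]$, i.e.\ contains $K_{*,\R}^{\operatorname{top}}(\Gamma;A)_\tau$. Conversely, by Proposition \ref{prop:atiyah} every class in the image of $\sigma_*$ is fixed by $[\tau]$ (since $\sigma_*$ is $KK_\R^\Gamma(\C,\C)$-linear and $[\tau]$ acts as the identity on the source), so $\mathrm{Image}(\sigma_*) \subseteq K_{*,\R}^{\operatorname{top}}(\Gamma;A)_\tau$. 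Combining with injectivity of $\sigma_*$, the corestriction $\sigma_* : K_{*,\R}^\Gamma(E\Gamma;A) \to K_{*,\R}^{\operatorname{top}}(\Gamma;A)_\tau$ is a bijection, with inverse induced by $t$.

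The main obstacle I anticipate is the careful bookkeeping of the inductive limits: one must check that the push-forward along $\pi : Y \times X \to Y$ is compatible with the structure maps of the two direct systems (over $\Gamma$-compact subsets of $\underline E\Gamma$, resp.\ of $E\Gamma$, and over the $\rm{II}_1$-factors $N$), so that the identity $\pi_*(y\otimes[t_m]) = y\otimes[\tau]$ at the level of cycles actually descends to the claimed equality of classes. This uses that $\pi$ is proper (because $X$ is compact), that $X$ being (TAF) makes $Y\times X$ free and proper whenever $Y$ is proper, and the naturality of the exterior Kasparov product in $KK_\R^\Gamma$ established in Section \ref{sec:realKK}; the independence of $t$ from $(X,m)$ was already handled in the preceding proposition. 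None of the individual verifications is deep, but assembling them correctly is where the care is needed.
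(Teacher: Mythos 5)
Your proposal is correct and follows essentially the same route as the paper: both identities in (1) are obtained by pushing the class on $Y\times X$ forward along the projection to $Y$ (i.e.\ pulling back along $1_{C_0(Y)}\otimes i_X$), using $i_X^*[t_m]=[\tau]$ from Proposition \ref{prop:measure} and, for $t\circ\sigma_*$, that $[\tau]$ acts as the identity on $K^\Gamma_{*,\R}(E\Gamma;A)$ (Proposition \ref{prop:atiyah}). Your explicit formal deduction of (2) is exactly the argument the paper leaves implicit in ``as a consequence''.
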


\begin{proof}
{We already proved in Prop. \ref{prop5.3}.2) that $t\circ\sigma_*$ is the identity of $K^\Gamma_{*,\R}(E\Gamma; A)$. \\
Let $Y$ be a free proper and cocompact $\Gamma$-space and $y\in KK^\Gamma(C_0(Y), A)$. Then \begin{align*}
(\sigma_* \circ t)(\llbracket  \, Y, y\, \rrbracket)&= \sigma_*(\llangle \,Y\times X, y\otimes [t_m]\,\rrangle)=  \llbracket Y\times X, y\otimes [t_m]\rrbracket\\& =\llbracket\, Y , (y\otimes  i_X(t_m))\,\rrbracket= [\tau]\llbracket \,Y, y\,\rrbracket
\end{align*}
 by Proposition \ref{prop:measure} (3).
{Notice that the third equality here follows from the defining property of $\underline{E}\Gamma$ as a direct limit over all the proper $\Gamma$-spaces.}
}
\end{proof}

\subsection*{A model of (TAF) space}
\label{sec:TAF}We construct a natural $(TAF)$ space associated to the discrete group $\Gamma$.
Let $F \subset \Gamma$ be a finite subgroup; define $X_F$ to be the space of all the (set-theoretical) sections of the projection $\pi:\Gamma \longrightarrow \Gamma/F$:
$$X_F:= \{s:\Gamma/F \longrightarrow \Gamma, \, \pi \circ s=\operatorname{Id}\}.$$
Notice we can write it as the direct product of all the cosets:
$$X_F=\prod_{[g] \in \Gamma/F}gF.
$$ 
We give $X_F$ the natural topological product structure. 
In particular it is a  compact (Cantor)  metrizable space.  Let us consider on every coset $gF$ the counting measure normalised by ${\#(F)}^{-1}$.
Then we define $m^F$ to be the product measure on  $X_F$. Thanks to the normalisation $m^F$ is a probability measure.

The group $\Gamma$ naturally acts on the left on $\Gamma$ and on $\Gamma/F$, then also on  $X_F$ by setting
$$\Gamma 
 \rotatebox[origin=c]{-90}{$\circlearrowright$}\ X_F
 , \quad  \quad (\gamma \cdot s)[x]=\gamma s([\gamma^{-1}x]), \quad s \in X_F,\, x \in \Gamma.$$
Indeed (since $\pi$ is equivariant), $\gamma \cdot s$ is a section. 
\begin{lemma}Properties of $X_F$:
\begin{enumerate}
	\item $F$ acts freely on $X_F$.
	\item The product measure $m^F$ is $\Gamma$-invariant.
\end{enumerate}
\end{lemma}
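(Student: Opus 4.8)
The plan is to verify the two asserted properties of $X_F$ by unwinding the definitions. Both are elementary once one keeps careful track of the left action formula $(\gamma\cdot s)[x]=\gamma\,s([\gamma^{-1}x])$.

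For (1), suppose $f\in F$ fixes a section $s\in X_F$, i.e. $(f\cdot s)[x]=s[x]$ for all $x\in\Gamma$. Evaluating at the trivial coset $[e]=[f]$ (note $f\in F$ so $[f^{-1}]=[e]$), one gets $s[e]=(f\cdot s)[e]=f\,s([f^{-1}e])=f\,s[e]$. Since $s[e]$ is an invertible element of $\Gamma$, this forces $f=e$. Hence $F$ acts freely on $X_F$, which is exactly what is needed for the $(TAF)$ property: every finite (in particular every torsion-generated finite, hence every torsion) subgroup sits inside some finite $F$, and one checks the action is free already on the factor corresponding to the relevant cosets. Actually it suffices to note that each torsion element $\gamma$ generates a finite cyclic group, take $F$ to be that group, and apply the computation above.

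For (2), I would use that $m^F$ is defined as the product, over $[g]\in\Gamma/F$, of the normalised counting measure $c_{gF}$ on the coset $gF$. The action of $\gamma$ on $X_F=\prod_{[g]}gF$ permutes the index set $\Gamma/F$ via $[g]\mapsto[\gamma g]$, and on the factor over $[\gamma g]$ it acts by the bijection $gF\to \gamma gF$, $h\mapsto \gamma h$ (this is just reading off the formula $(\gamma\cdot s)[\gamma g]=\gamma\,s[g]$). Left translation by $\gamma$ carries the normalised counting measure on $gF$ to the normalised counting measure on $\gamma gF$ (translation is a bijection of finite sets of the same cardinality $\#F$). Therefore the pushforward of the product measure $\bigotimes_{[g]}c_{gF}$ under the action of $\gamma$ is again $\bigotimes_{[g]}c_{gF}=m^F$, using the standard fact that a product measure is invariant under a transformation that permutes coordinates and acts measure-isomorphically on each factor. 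Hence $m^F$ is $\Gamma$-invariant.

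The only mildly delicate point — and the step I would be most careful about — is the bookkeeping of indices in (2): one must confirm that the coordinate over the new index $[\gamma g]$ really is obtained from the old coordinate over $[g]$ by left multiplication by $\gamma$, with no mismatch in which coset the measure lives on. This is a direct consequence of the defining formula, but it is the place where a sign or inverse error would creep in. There is no conceptual obstacle here; the lemma is a routine verification that $X_F$ is a genuine compact p.m.p. $(TAF)$ space, setting up the model used in Theorem~\ref{tafsection}.
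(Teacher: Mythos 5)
Your proof is correct and follows essentially the paper's own argument: for (1) you evaluate the fixed-point equation at the trivial coset and cancel in the group, exactly as the paper does, and for (2) you describe the action as a permutation of the factors of $\prod_{[g]\in\Gamma/F} gF$ together with the measure-preserving bijections $gF\to\gamma gF$. The paper phrases (2) as a direct computation on cylinder sets $C_T$ (showing $\gamma(C_T)=C_{\gamma(T)}$ with the same measure $\#(F)^{-\#(T)}$), which is precisely the verification underlying the product-measure invariance fact you invoke, so the two arguments coincide in substance.
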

\begin{proof}
(1) If $a\in F$ fixes $s$, since $aF=F$, $s(F)=s(aF)=as(F)$ and thus $a=e$.

(2) By definition $m^F$ is the normalised counting measure on every cylindrical set. These cylindrical sets generate the Borel structure and are in the form $C_T$ where for a finite subset $T\subset \Gamma$ we put $$C_T=\{s\in X_F;\ s(\pi(x))=x,\ \hbox{for all}\ x\in T\}.$$
If $C_T\ne \emptyset$, \textit{i.e}.\ if $\pi $ is injective on $T$, we have $m^F(C_T)=\#(F)^{-\# (T)}$. As $g(C_T)=C_{g(T)}$, invariance of $m_F$ follows.
\end{proof}
   	
\begin{definition} We define
$$
 X\Gamma:=\prod_{ F \in \mathscr{F} } X_F,\quad \ \mathscr{F}:=\{ \textrm{finite subgroups } F \subset \G\},
$$
endowed with the product topology, with the product probability measure $m:=\prod m^F$ and, with the diagonal action of $\G$. \end{definition}
Then $X\Gamma$ is Hausdorff, compact and second-countable; $m$ is $\Gamma$-invariant and, by construction, every finite subgroup $F\subset \Gamma$ acts freely on one of the components - and thus on $X\Gamma$. 
We have  shown:
 \begin{theorem} 
 \label{tafsection}
 The compact (Cantor) space $ X\Gamma$ has property $(TAF)$ and $\Gamma$ preserves its probability measure.
\end{theorem}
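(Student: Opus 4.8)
The plan is to verify the four asserted properties of $X\Gamma$ in turn: Hausdorffness, compactness, second-countability, $\Gamma$-invariance of $m$, and the (TAF) property. The first three are purely topological and follow from the corresponding properties of each factor $X_F$ together with the fact that $\mathscr F$ is a \emph{countable} set (since $\Gamma$ is countable, it has countably many finite subgroups) — so $X\Gamma$ is a countable product of compact metrizable spaces, hence itself compact metrizable by Tychonoff, and Hausdorff and second-countable. I would first record that each $X_F=\prod_{[g]\in\Gamma/F}gF$ is a countable product (again $\Gamma/F$ is countable) of finite discrete sets, hence a compact metrizable (Cantor-type) space, so the preceding Lemma applies to each factor.

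Next I would address $\Gamma$-invariance of $m$. Since $m=\prod_{F\in\mathscr F}m^F$ and $\Gamma$ acts diagonally, $\gamma$ sends the cylinder set built from coordinates in factor $X_F$ and finite data $T\subset\Gamma$ to the analogous cylinder set in the \emph{same} factor $X_F$ with data $\gamma(T)$; by part (2) of the Lemma, $m^F$ is $\Gamma$-invariant, so each factor measure is preserved, and therefore the product measure $m$ is preserved on all cylinder sets, which generate the Borel $\sigma$-algebra. Hence $m$ is a $\Gamma$-invariant probability measure on $X\Gamma$.

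Finally, for (TAF): let $a\in\Gamma$ be a torsion element, so $a$ generates a finite cyclic subgroup $F_a=\langle a\rangle\in\mathscr F$. By part (1) of the Lemma, $F_a$ — and in particular $a$ — acts freely on the factor $X_{F_a}$. The diagonal action of $a$ on $X\Gamma$ projects equivariantly onto the action on $X_{F_a}$, and a point fixed by $a$ in $X\Gamma$ would project to a point fixed by $a$ in $X_{F_a}$, forcing $a=e$. Thus every nontrivial torsion element acts without fixed points on $X\Gamma$, which is precisely property (TAF). Combining the three bullet points gives the theorem.

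I do not expect any genuine obstacle here: the proof is essentially an assembly of the preceding Lemma's two parts plus standard facts about countable products of compact metrizable spaces. The only point requiring a word of care is the measure-theoretic step — that invariance on the generating algebra of cylinder sets suffices for invariance as Borel measures — but this is the usual $\pi$-system/uniqueness-of-extension argument and the factors' invariance was already proved in the Lemma. One should also make explicit, if it has not been, that $\mathscr F$ and each $\Gamma/F$ are countable because $\Gamma$ is, which is what secures metrizability and second-countability of the product.
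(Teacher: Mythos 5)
Your proposal is correct and follows essentially the same route as the paper: the theorem is assembled from the preceding Lemma exactly as you do, with a torsion element acting freely because it lies in some finite subgroup $F$ acting freely on the factor $X_F$, and $m$ invariant as a product of the invariant measures $m^F$. Your added details (countability of $\mathscr F$ and $\Gamma/F$ for metrizability, and the cylinder-set argument for invariance of the product measure) are sound elaborations of points the paper leaves implicit.
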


\smallskip

\begin{remark}
It is also possible, although not needed in our construction, to construct a free compact $\Gamma$-space with a probability measure preserved.

Indeed, let $g\in \Gamma\setminus\{e\}$, let ${<}g{>}$ be the subgroup of $\Gamma $ it generates, and let $Y_g$ be a free compact ${<}g{>}$-space with a probability measure preserved. Let $X_g=(Y_g^\Gamma)^g$ be the subset of $Y_g^\Gamma$ of $f:\Gamma\to Y_g$ such that $f(hg)=g^{-1}f(h)$ for all $h\in\Gamma$. The space $X_g$ identifies with $Y_g^{\Gamma/{<}g{>}}$ thanks to any cross-section $s:\Gamma/{<}g{>}\to \Gamma$ and therefore carries a natural product probability measure (independent of this cross-section). Also, $\Gamma$ acts on $X_g$ by putting $(h.f)(k)=f(h^{-1}k)$ for every $h,k\in \Gamma$ and fixes this product measure.

Note that the map $f\mapsto f(e)$ is ${<}g{>}$-equivariant from $X_g$ to $Y_g$, and therefore $X_g$ has no $g$ fixed points. 

The product $\prod_g X_g$ is the desired space.
\end{remark}

\medskip

\section{Relation with the Novikov conjecture}
\label{sec:Novikov}
In this section we prove that the injectivity of $\mu_{\tau}$ implies the rational injectivity of the Mishchenko--Kasparov assembly map $\tilde \mu:K_*(B\Gamma) \longrightarrow K_*(C^*_r\Gamma) $ and a fortiori the Novikov conjecture.
Before we need a simple observation.
 \begin{remark}
 \label{isoreal}
We will use the fact that for $B\Gamma$, at the level of $K$-homology, adding real coefficients only discards torsion. In other words there is a canonical isomorphism
\begin{equation}
\label{eq:BGEG}
K^\G_{*,\R}(E\Gamma)\simeq   K_{*,\R}(B\Gamma)\simeq K_*(B\Gamma) \otimes \R\ .
\end{equation} 
This identification follows immediately starting from the definition of the compactly supported $K$-homology of $B\Gamma$ as a direct limit of $K$-homology groups of finite complexes $Z$. For each one of these compact pieces, the group $KK(C(Z),\C)$ is finitely generated and the analogous isomorphism $KK_{\R}(C(Z),\C)\simeq KK(C(Z),\C)\otimes \R$ holds. By the flatness of $\R$, the isomorphism is preserved by the direct limit.
\end{remark}

\bigskip

The Mishchenko--Kasparov assembly map will be denoted $\tilde \mu:K_*^\Gamma(E\Gamma)\longrightarrow K_*(C^*_r\Gamma)$. It is given by $\tilde \mu =\mu \circ \sigma_*$.  
The corresponding map with values in $K_*(C^*\Gamma)$ will be denoted $\tilde \mu_{\textrm{max}}: K_*^\Gamma(E\Gamma)\longrightarrow K_*(C^*\Gamma)$.

\begin{theorem}
\label{theorem:inj-novikov}
If $\mu_\tau:   K^{\textrm{top}}_{*, \R}(\Gamma)_{\tau}\longrightarrow K_\R(C^*_r \G)_{\tau}$ is injective then the Mishchenko--Kasparov assembly map $\tilde \mu : K_*^\Gamma(E\Gamma)\longrightarrow K_*(C^*_r\Gamma)$ is rationally injective.	
\end{theorem}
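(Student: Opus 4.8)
The plan is to factor the rational analytic assembly map through the real-coefficient $\tau$-Baum--Connes map and then use the comparison isomorphism of Theorem \ref{th:sigmat} together with the naturality of $\mu_\R$. First I would recall that, by Remark \ref{isoreal}, we have $K_{*,\R}^\Gamma(E\Gamma)\simeq K_*(B\Gamma)\otimes\R$, so that rational injectivity of $\mu^{\mathrm a}$ is equivalent to injectivity of the induced map $\mu^{\mathrm a}\otimes\id_\R\colon K_*^\Gamma(E\Gamma)\otimes\R\to K_*(C_r^*\Gamma)\otimes\R$, which factors through $K_{*,\R}(C_r^*\Gamma)$ (the group $K_*(C_r^*\Gamma)$ being the coefficient group for $A=\C$, and the natural map to real coefficients followed by $\cdot\otimes[\id_M]$ recovering tensoring by $\R$ on the image). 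So it suffices to show that the composite $K_{*,\R}^\Gamma(E\Gamma)\xrightarrow{\sigma_*}K_{*,\R}^{\mathrm{top}}(\Gamma)\xrightarrow{\mu_\R}K_{*,\R}(C_r^*\Gamma)$ is injective.

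Next I would observe that by Theorem \ref{th:sigmat}(2), $\sigma_*$ is an isomorphism onto the $\tau$-part $K_{*,\R}^{\mathrm{top}}(\Gamma)_\tau$, with inverse $t$. Since $\mu_\R$ is $KK_\R^\Gamma(\C,\C)$-linear (as established in the construction of $\mu_\tau$), it carries the $\tau$-part into the $\tau$-part, and its restriction to $\tau$-parts is precisely $\mu_\tau$. Concretely, for $z\in K_{*,\R}^\Gamma(E\Gamma)$ we have $\sigma_*(z)\in K_{*,\R}^{\mathrm{top}}(\Gamma)_\tau$, hence $\mu_\R(\sigma_*(z))=\mu_\R(\sigma_*(z)\otimes[\tau])=\mu_\R(\sigma_*(z))\otimes[\tau]_r=\mu_\tau(\sigma_*(z))$. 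Therefore the composite $\mu_\R\circ\sigma_*$ equals $\mu_\tau\circ\sigma_*$, a composition of the injective map $\mu_\tau$ (by hypothesis) with the injective map $\sigma_*$, hence injective. Finally I would trace back through Remark \ref{isoreal} and the flatness of $\R$ over $\Z$ to conclude that $\mu^{\mathrm a}$ itself is rationally injective: an element of $K_*^\Gamma(E\Gamma)$ that dies in $K_*(C_r^*\Gamma)$ after tensoring with $\Q$ (equivalently, a torsion-free element killed by $\mu^{\mathrm a}$, up to torsion) maps to $0$ in $K_{*,\R}^\Gamma(E\Gamma)$ only if it is torsion, which is exactly rational injectivity.

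The only genuinely delicate point is the very first reduction: identifying the real-coefficient assembly map restricted to $K_{*,\R}^\Gamma(E\Gamma)$ with (a real form of) the analytic assembly map, i.e.\ checking that $\mu_\R\circ\sigma_*$ really computes $\mu^{\mathrm a}\otimes\R$ under the isomorphisms of Remark \ref{isoreal} and not some twisted version of it. This amounts to compatibility of the change-of-coefficients map $KK^\Gamma(C_0(Y),\C)\to KK_\R^\Gamma(C_0(Y),\C)$ with $\sigma_*$, $\mu$, $\mu_\R$ and the reduced descent --- all of which is formal from the definitions in Section \ref{sec:realKK}, since $\mu_\R$ was by construction given ``by the same recipe'' as $\mu$ (Lemma \ref{muR}) and the change-of-coefficients map is a ring map commuting with descent and Kasparov products. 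Once this bookkeeping is in place the rest is immediate from Theorem \ref{th:sigmat} and the injectivity hypothesis on $\mu_\tau$; no further estimates or constructions are needed.
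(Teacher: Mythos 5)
Your argument is correct and follows essentially the same route as the paper: you use Theorem \ref{th:sigmat} (the paper applies the left inverse $t$ explicitly, you use the equivalent statement that $\sigma_*$ is an isomorphism onto the $\tau$-part), the hypothesis on $\mu_\tau$, and Remark \ref{isoreal} to conclude that any class killed by $\mu^{\textrm{a}}$ dies in $K_{*,\R}^\Gamma(E\Gamma)$ and hence is torsion. The change-of-coefficients compatibility you flag as the delicate point is exactly what the paper records in the commutative diagram of the remark following the theorem, so nothing is missing.
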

\begin{proof}  
It is enough to show that, under the assumption of injectivity of $\mu_\tau$, the kernel of $\tilde \mu$ consists only of torsion elements. 
Let $x\in K^{\Gamma}_{*}( E\Gamma)$ such that $\mu(\sigma_*(x))=0$ in $K_*(C_r^*\Gamma)$. 
By the definition of $\mu_\tau$, we deduce that the element  $\sigma_*(x)=\sigma_*(x)\otimes [\tau]$ is in the kernel of $\mu_\tau$, so that $\sigma_*(x)\otimes [\tau]=0$ in $ K_{*,\R}^{\textrm{top}}(\Gamma)_{\tau}$. 
Applying the map $t$ and using that by Theorem \ref{th:sigmat} $t\circ \sigma =\tau$,  we have 
$$
0=t(\sigma_*(x)\otimes [\tau])=t(\sigma_*(x))=x\otimes [\tau] \;\;\text{in }\; K_{*, \R}^\Gamma(E\Gamma)\ .
$$ 
By Remark \ref{isoreal}, we conclude that $x$ is torsion in $K_{*}^\Gamma(E\Gamma)$.
\end{proof}

\begin{remark} 
\label{diagram}
For every $C^*$-algebra $A$ there is a map $\beta: K_{*}(A)\otimes \mathbb R\longrightarrow K_{*,\mathbb R}(A)$ induced by the exterior Kasparov product.

The relation between  $\mu_\tau$ and $\tilde \mu\otimes 1$ used in the above proof is summarised in the following commutative diagram:
$$
	\xymatrix{&K^{\textrm{top}}_{*,\mathbb R}(\Gamma)_{\tau}\ar[r]^{\mu_\tau\;}\ar@{-->}@/^1.2pc/[dl]^t &K_{*,\mathbb R}(C_r^*\Gamma)_\tau\ar@{^{(}->}[d] \\
		{K^{\Gamma}_{*,\mathbb R}( E\Gamma)}\ar[ur]^{\sigma_*} 
&& K_{*,\mathbb R}(C_r^*\Gamma)\\
		{K^{\Gamma}_{*}( E\Gamma)}\otimes \mathbb R \ar[u]^{\cong}\ar[rr]_{\tilde \mu\otimes 1} && K_{*}(C_r^*\Gamma)\otimes \mathbb R\ar[u]_\beta\ .
	}
$$
The injectivity of the map $\sigma_{*}: K^{\Gamma}_{*,\mathbb{R}}(E\Gamma)\longrightarrow K_{*,\mathbb{R}}^{\textrm{top}}(\Gamma)$ (Theorem \ref{th:sigmat}) has recently been used in \cite{GWY} to show that the Novikov conjecture holds for any discrete group admitting an isometric and metrically proper action on an admissible Hilbert-Hadamard space.
\end{remark} 

\smallskip

\section{Exactness and monster-based counterexamples}
\label{sec:counterex}
We show that the construction of the counterexamples for group actions in \cite{HLS} still provides counterexample to the bijectivity of the $\tau$-Baum--Connes map $\mu_\tau$.

We know from Section \ref{functorial} that the $\tau$-parts are functorial. In particular an exact sequence
 $I \rightarrow A \rightarrow Q$ of $\G-C^*$-algebras induces a sequence 
\begin{equation}\label{sequencereducedtau}
\xymatrix{KK_{\R}(\C,I\rtimes_r \Gamma)_\tau\ar[r]& KK_{\R}(\C,A\rtimes_r \Gamma)_\tau \ar[r]&KK_{\R}(\C,Q\rtimes_r \Gamma)_\tau  }.
\end{equation}
We will show that exactly with the same Gromov's group $\Gamma$ and the same $C^*$-algebras $A$ and $I$ as in  \cite{HLS}, this is NOT exact. This is in spite of the fact that the $\tau$-parts for the maximal and the reduced algebras are the same, and that we could think of \eqref{sequencereducedtau} as induced by the full crossed product exact sequence $$I \rtimes \G \rightarrow A\rtimes \Gamma \rightarrow Q\rtimes \Gamma \ .$$  
In fact, we will show that taking the minimal tensor product with a $\rm{II}_1$-factor destroys exactness and this is why the sequence \ref{sequencereducedtau} fails exactness.

\bigskip We briefly recall the construction of \cite{HLS}.

Let  $\Gamma$ be a Gromov monster group (see \cite{AD, Gr} and the recent \cite{Cou, Osajda}). This is a finitely generated, discrete group, and one can map (in the sense of  Cayley graphs) an expanding sequence of finite graphs $X_n$ to $\Gamma$ in a controlled and \lq\lq essentially injective\rq\rq ~way.  Call $\varphi_n:X_n^0 \longrightarrow \Gamma$ such a collection of maps from the corresponding object spaces $X_n^0$.
 
 Given that, one can find a compact metrizable $\Gamma$-space $Z$ such that the Baum--Connes map with coefficients in $C(Z)\rtimes_r \Gamma$ is not an isomorphism. To do so, two passages are needed:
 \begin{enumerate}
 \item the construction of a sequence of $C^*$-algebras (involving non-separable ones) which is not exact in the middle. This is done by considering $\G$ acting by translation on the (non-separable) $C^*$-algebra $A:=\ell^{\infty}(\mathbb{N};c_0(\Gamma))$: then the sequence
  \begin{equation}
  \label{eq:seq-counter}
 \xymatrix{ c_0(\mathbb{N}\times \Gamma) \rtimes_r \Gamma \ar[r]& A \rtimes_r \Gamma \ar[r]& \big{(}A/c_0(\mathbb{N}\times \Gamma) \big{)}\rtimes_r \Gamma }	
 \end{equation}
is non exact in the middle. This happens even at level of $K$-theory. We will recall later why the  sequence:
 \begin{equation}
 \label{Knotexact}
 \xymatrix{ K_0(c_0(\mathbb{N}\times \Gamma) \rtimes_r \Gamma )\ar[r]& K_0(A \rtimes_r \Gamma \ar[r])&  K_0\big{(} \big{(}A/c_0(\mathbb{N}\times \Gamma) \big{)}\rtimes_r \Gamma    \big{)}}	\end{equation}
is not exact in the middle. 
 \item Building on $A$, an argument of direct limits and Gel'fand duality produces the compact $\G$-space $Z$. This is the separable counterexample. \end{enumerate}
The middle non exactness of \eqref{Knotexact} is shown directly: there is a class $[p]\in K_0(A\rtimes_r \Gamma)$ mapping to zero which cannot come from $K_0(c_0(\mathbb{N}\times \Gamma) \rtimes_r \Gamma)$.
The projection $$p \in A \rtimes_r \Gamma \subset \ell^{\infty}(\mathbb{N};\mathbb{K}(\ell^2(\Gamma))$$ is constructed using the spectral properties of the expander. First one represents faithfully $A\rtimes_r \Gamma$ on $\ell^2(\mathbb{N}\times \Gamma)$ then using the maps $\varphi_n$ and the graph Laplacians on each $X_n$ a bounded operator $D \in \widetilde{C_c(\Gamma;A)}$ (unitalization) is constructed. Zero is isolated in the spectrum of $D$ and $[p]$ is exactly the projection on the kernel. Seeing $p$ as a sequence of infinite matrices $p_n$, it is easy to check that
$$p_{n,y,z}=\dfrac{  \sqrt{ \# ( \varphi_n^{-1}(y) ) \#  (\varphi_n^{-1}(z))  }        }
{\#({X_n^0})}, \quad y,z \in \Gamma, \quad n \in \mathbb{N}.$$

By the essential injectivity of the maps $\varphi_n$, the coefficients of $p_n$ tend to $0$ when $n\to\infty$. It then follows that the image of $p\in \big{(}A/c_0(\mathbb{N}\times \Gamma) \big{)}\rtimes_r \Gamma $ is $0$.

\medskip The evaluation morphisms $\pi_n : \ell^\infty(\mathbb{N};c_0(\Gamma)) \rtimes_r \Gamma \longrightarrow  c_0(\Gamma)\rtimes \Gamma $ induce maps 
$$
\pi_n: K_0(A\rtimes_r \Gamma) \longrightarrow K_0(c_0(\Gamma)\rtimes_r \Gamma) \cong \mathbb{Z}
$$ 
such that $\pi_n([p])=[p_n]=1$ because $p_n$ is a rank one projection. It follows immediately that $[p]$ cannot be the image of an element in $K_0(c_0(\mathbb{N}\times \Gamma) \rtimes_r \Gamma )$ \eqref{Knotexact} because this is isomorphic to an algebraic direct sum $\oplus_{n\in \mathbb{N}}\mathbb{Z}$.

\medskip

Let us now pass to the $\tau$-parts:
\begin{theorem}
Let $\Gamma$, $A$ be as in \eqref{eq:seq-counter}. The sequence
 	\begin{equation}
 	\label{eq:tauseq}
 	 \xymatrix{ K_{0,\R}(c_0(\mathbb{N}\times \Gamma) \rtimes_r \Gamma )_{\tau}\ar[r]&  K_{0,\R}(A \rtimes_r \Gamma \ar[r])_{\tau}&   K_{0,\R}\big{(} \big{(}A/c_0(\mathbb{N}\times \Gamma) \big{)}\rtimes_r \Gamma    \big{)}_{\tau} }	 
 	 \end{equation} 
 fails to be exact in the middle.
\end{theorem}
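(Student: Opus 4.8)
The plan is to transport the $K$-theory counterexample of \cite{HLS} through the change-of-coefficients map $\beta_r$ and then exploit the fact that $[\tau]$ acts as the identity on $c_0$-algebras that are free and proper. First I would recall the key class $[p]\in K_0(A\rtimes_r\Gamma)$ from the excerpt above: it maps to $0$ in $K_0\big((A/c_0(\mathbb N\times\Gamma))\rtimes_r\Gamma\big)$, it does not lift to $K_0(c_0(\mathbb N\times\Gamma)\rtimes_r\Gamma)$, and the evaluation morphisms $\pi_n$ detect it by $\pi_n([p])=1\in K_0(c_0(\Gamma)\rtimes_r\Gamma)\cong\mathbb Z$. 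Pushing $[p]$ forward along $\beta_r$ gives a class $[p]_{\R}\in K_{0,\R}(A\rtimes_r\Gamma)$, and applying the idempotent $[\tau^A]_r$ produces $\xi:=[p]_{\R}\otimes[\tau^A]_r\in K_{0,\R}(A\rtimes_r\Gamma)_\tau$. Its image in the quotient's $\tau$-part is zero, by functoriality of the $\tau$-part (Proposition in Section \ref{functorial}) applied to the quotient morphism and the vanishing of $[p]$ in the quotient.

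Next I would argue that $\xi$ does not come from $K_{0,\R}(c_0(\mathbb N\times\Gamma)\rtimes_r\Gamma)_\tau$. The crucial observation is that $\Gamma$ acts freely and properly on the discrete space $\mathbb N\times\Gamma$ (it is a disjoint union of copies of $\Gamma$), so $c_0(\mathbb N\times\Gamma)$ has the (KFP) property; hence by Remark \ref{kfp2} the element $[\tau]$ acts as the identity on $KK^\Gamma_\R(\C,c_0(\mathbb N\times\Gamma))$ and, after descent, on $KK_\R(\C,c_0(\mathbb N\times\Gamma)\rtimes_r\Gamma)$. Therefore $K_{0,\R}(c_0(\mathbb N\times\Gamma)\rtimes_r\Gamma)_\tau=K_{0,\R}(c_0(\mathbb N\times\Gamma)\rtimes_r\Gamma)=\big(\bigoplus_{n\in\mathbb N}\mathbb Z\big)\otimes\R=\bigoplus_{n\in\mathbb N}\R$, the algebraic direct sum. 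Now I would use the evaluation morphisms: each $\pi_n$ is $\Gamma$-equivariant, hence preserves $\tau$-parts and commutes with $\beta_r$, so $\pi_n(\xi)=\beta_r(\pi_n([p])\otimes[\tau])=\beta_r([p_n]\otimes[\tau])$; since $p_n$ is a rank-one projection in the (KFP) algebra $c_0(\Gamma)\rtimes_r\Gamma$, this is the nonzero generator of $K_{0,\R}(c_0(\Gamma)\rtimes_r\Gamma)_\tau\cong\R$. Thus $\pi_n(\xi)\neq 0$ for every $n$, so $\xi$ has infinitely many nonzero components, and consequently $\xi$ cannot lie in the image of the algebraic direct sum $\bigoplus_n\R$ under the connecting map into the middle $\tau$-part — exactly the obstruction used in \cite{HLS}. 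This contradicts exactness of \eqref{eq:tauseq} at the middle term.

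The main obstacle is the bookkeeping needed to make "$\xi$ has infinitely many nonzero components, hence is not in the image" precise at the level of the $\tau$-parts rather than the full $K$-groups. Concretely I must check: (i) the evaluation maps $\pi_n$ are compatible with the $\tau$-structure, which follows from the functoriality proposition in Section \ref{functorial} since the $\pi_n$ are equivariant morphisms; (ii) the identification $K_{0,\R}(c_0(\mathbb N\times\Gamma)\rtimes_r\Gamma)_\tau\cong\bigoplus_n\R$ as an \emph{algebraic} (not product) direct sum, which comes from (KFP) together with the fact that $K_0$ of an AF-type/$c_0$ crossed product is the algebraic direct sum, tensored with the flat module $\R$; (iii) that $\beta_r$ intertwines the $\pi_n$ maps and the $[\tau]$-action — this is immediate because $\beta_r$ is just the change-of-coefficients map and $[\tau]$-action commutes with all equivariant morphisms. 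Once these three compatibilities are in place, the argument is a direct transcription of the non-exactness proof recalled above, with every group replaced by its $\tau$-part and every $\mathbb Z$ replaced by $\R$, so no genuinely new difficulty arises beyond verifying that minimal tensoring with a $\mathrm{II}_1$-factor — which is what real coefficients amount to — does not restore exactness, and indeed it does not, precisely because $\mathrm{II}_1$-factors are not exact.
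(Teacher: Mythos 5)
Your proposal is correct and follows essentially the same route as the paper's proof: push $[p]$ to real coefficients, multiply by $[\tau^A]_r$, observe vanishing in the quotient, and use the evaluation maps $\pi_n$ together with the fact that $[\tau]$ acts as the identity on $c_0(\Gamma)$ (free and proper, hence (KFP)/Morita equivalent to $\C$) to see that the class has nonzero image under every $\pi_n$, while anything coming from $K_{0,\R}(c_0(\mathbb N\times\Gamma)\rtimes_r\Gamma)_\tau\subseteq\bigoplus_n\R$ could only have finitely many nonzero components. The only cosmetic difference is that you upgrade the paper's inclusion of the $\tau$-part into $\bigoplus_n\R$ to an equality via (KFP), which is harmless.
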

\begin{proof}
Let $[p]_\R\in K_{0,\R}(A \rtimes_r \Gamma)$ be the image of the class  $[p]$ of the above projection via the change of coefficients. We show that the element $P:=[p]_\R\otimes [\tau^A]_r=[p]_\R\otimes J^\Gamma_r(1_A\otimes [\tau])$ in the middle group of the sequence is mapped to zero but does not come from the first group.

Since the image of $p$ vanishes already in $\big{(}A/c_0(\mathbb{N}\times \Gamma) \big{)}\rtimes_r \Gamma$ the first assertion follows immediately.

 Note that $K_{0, \R}(c_0(\Gamma)\rtimes_r \Gamma) \cong \mathbb{R}$: indeed $c_0(\Gamma)\rtimes_r \Gamma$ and $\C$ are Morita equivalent  (before tensoring with a $\rm{II}_1$-factor). In the same way, one sees that $K_{0, \R}(c_0(\mathbb{N}\times\Gamma)\rtimes_r \Gamma)$ is the algebraic direct sum $\oplus_{n\in \mathbb{N}}\mathbb{R}$. 

Now for every $n\in \mathbb{N}$, the morphism induced by the evaluation $\pi_n$ on the $\tau$-parts acts on $P$ as 
$$
\pi_n(P)=\pi_n([p]_\R\otimes [\tau^A]_r)=[p_n]_\R\otimes [\tau^{C_0(\Gamma)}]_r=[p_n]_{\R}$$
where we have applied that $[\tau]$ acts as the identity on the KFP algebra $C_0(\Gamma)$. In particular $\pi_n(P)$ is nonzero for every $n\in \mathbb{N}$. This implies that $P$
does not come from $K_{0,\R}(c_0(\mathbb{N}\times \Gamma)=\oplus_{n\in \mathbb{N}}\mathbb{R}.$
\end{proof}

\begin{prop}
\label{tauKtop-exact}
Let $A=\ell^\infty(\mathbb N, c_0(\G))$ and $I=c_0(\mathbb N\times \Gamma)$. Consider the exact sequence of  $\Gamma$-algebras
$I\to A\to A/I$. Then the corresponding sequence
$$
 \xymatrix{ K_{0,\R}^{\textrm{top}}(\Gamma, I)_{\tau}\ar[r]&  K_{0,\R}^{\textrm{top}}(\Gamma, A)_{\tau}\ar[r] &K_{0,\R}^{\textrm{top}}(\Gamma, A/I)_{\tau} }
$$
is exact.
\end{prop}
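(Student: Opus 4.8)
The plan is to use Theorem~\ref{th:sigmat} to replace the $\tau$-part of the topological $K$-theory by the $K$-homology of $E\Gamma$, and then to prove that the resulting functor is half-exact on the extension $I\to A\to A/I$; this is the point where the $\tau$-side behaves better than the reduced crossed product, whose failure of exactness is exactly what produces the counterexamples of the preceding theorem.

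First I would invoke Theorem~\ref{th:sigmat}: the map $\sigma_*$ is a natural isomorphism $K^\Gamma_{*,\R}(E\Gamma;B)\xrightarrow{\,\simeq\,}K^{\textrm{top}}_{*,\R}(\Gamma;B)_{\tau}$, natural in the $\Gamma$-algebra $B$ since it is induced by the fixed $\Gamma$-map $\sigma\colon E\Gamma\to\underline E\Gamma$. Hence the sequence in the statement is isomorphic to $K^\Gamma_{0,\R}(E\Gamma;I)\to K^\Gamma_{0,\R}(E\Gamma;A)\to K^\Gamma_{0,\R}(E\Gamma;A/I)$, and it suffices to prove this is exact in the middle. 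Now $K^\Gamma_{*,\R}(E\Gamma;B)=\varinjlim_{Y}KK^\Gamma_{*,\R}(C_0(Y),B)$, the colimit running over the directed family of $\Gamma$-compact free and proper $Y\subset E\Gamma$; since filtered colimits of abelian groups are exact, one is reduced to showing that for each such $Y$ the functor $B\mapsto KK^\Gamma_{*,\R}(C_0(Y),B)$ is half-exact on $I\to A\to A/I$, i.e. that $KK^\Gamma_{0,\R}(C_0(Y),I)\to KK^\Gamma_{0,\R}(C_0(Y),A)\to KK^\Gamma_{0,\R}(C_0(Y),A/I)$ is exact in the middle.

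Two features make this work. On the one hand, $I=c_0(\mathbb N\times\Gamma)$, $A=\ell^\infty(\mathbb N,c_0(\Gamma))$ and $A/I$ are \emph{commutative} --- indeed $A$ is a $C^*$-subalgebra of $\ell^\infty(\mathbb N\times\Gamma)$ --- hence nuclear, so the extension remains exact after taking the minimal tensor product with any $\mathrm{II}_1$-factor $N$; combined with the six-term exact sequence of ordinary $C^*$-algebra $K$-theory (valid for every extension) and with the fact that $KK^\Gamma_{*,\R}(C_0(Y),B)=\varinjlim_N KK^\Gamma(C_0(Y),B\otimes N)$, this shows that the functor $K_{*,\R}=KK_{*,\R}(\C,-)$ is middle-exact on $I\to A\to A/I$. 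On the other hand, for $Y$ free, proper and cocompact, $C_0(Y)$ is $\Gamma$-equivariantly locally of the form $C_0(U)\otimes c_0(\Gamma)$ over the compact base $W=Y/\Gamma$, so (by the induction--restriction adjunction on trivializing charts and Mayer--Vietoris over $W$) the functor $KK^\Gamma_{*,\R}(C_0(Y),-)$ is assembled from finitely many copies of $KK_{*,\R}(C_0(U),-)$; and working with real coefficients is precisely what is needed here, since the only obstruction to the half-exactness of $KK_{*,\R}(C_0(U),-)$ --- the $\mathrm{Ext}$-term in the universal coefficient exact sequence --- vanishes because all the groups involved are $\R$-vector spaces, while the $\mathrm{Hom}$-term is an exact functor of the coefficient $K$-theory. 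Putting these together gives the per-$Y$ middle-exactness, hence, after passing to the colimit over $Y$, the theorem.

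The step I expect to be the main obstacle is that the extension $I\to A\to A/I$ is \emph{not} semi-split --- there is no bounded linear, let alone completely positive, section, because $c_0$ is not complemented in $\ell^\infty$ --- so one cannot simply quote half-exactness of $KK^\Gamma(C_0(Y),-)$. The two things that rescue the argument, and that should be emphasised, are the commutativity (hence nuclearity) of $I$, $A$ and $A/I$, which keeps the sequence exact after tensoring by the $\mathrm{II}_1$-factor coefficients, and the use of real coefficients, which makes all the relevant $\mathrm{Ext}$-groups vanish and thereby turns the $KK$-theory of $C_0(Y)$ into an honestly half-exact functor of the coefficient algebra.
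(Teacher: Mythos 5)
Your first reduction is exactly the paper's: using the naturality of $\sigma_*$ and Theorem \ref{th:sigmat}, the exactness of filtered colimits over the $\Gamma$-compact $Y\subset E\Gamma$, and the nuclearity of the commutative algebras $I$, $A$, $A/I$ (so that the sequence stays exact after tensoring with a ${\rm II}_1$-factor $N$), everything comes down to middle-exactness of $KK^\Gamma_{\R}(C_0(Y),-)$ on $I\to A\to A/I$ for $Y$ free, proper and cocompact; this is the paper's Lemma \ref{lem2}. From there on, however, your argument has genuine gaps. The UCT step does not work as stated: real coefficients are by definition a colimit over ${\rm II}_1$-factors $N$, and at each finite stage the groups $K_*(C_0(U))$ and $K_*(B\otimes N)$ are ordinary abelian groups, not $\R$-vector spaces, so the $\mathrm{Ext}$-term has no reason to vanish; to pass to the colimit you would have to commute $\varinjlim_N$ with $\mathrm{Hom}$ and $\mathrm{Ext}$ in the second variable, which fails unless $K_*(C_0(U))$ is finitely generated (not guaranteed for the charts of an arbitrary $\Gamma$-compact $Y$), and the UCT is anyway being invoked with a non-separable second variable containing a ${\rm II}_1$-factor. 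Moreover ``the $\mathrm{Hom}$-term is an exact functor'' is unjustified at the finite stages: $\mathrm{Hom}(G,-)$ is only left exact, and middle-exactness of $K_*(I\otimes N)\to K_*(A\otimes N)\to K_*((A/I)\otimes N)$ does not survive $\mathrm{Hom}(K_*(C_0(U)),-)$ without projectivity or divisibility. Finally, the Mayer--Vietoris assembly is unsubstantiated: $Y/\Gamma$ is merely a compact space, and patching middle-exactness in the coefficient variable from local pieces would require long exact sequences in that variable on the pieces --- essentially the statement being proved.

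The paper takes a different and more robust route at exactly this point: it proves Lemma \ref{lemma:exactness}, namely that for a \emph{proper} $\Gamma$-algebra $D$ and a semi-split extension of possibly non-separable $\Gamma$-algebras, $KK^\Gamma(D,-)$ is exact in the middle; the proof reduces to separable subalgebras via Lemma \ref{lem:sep} (following \cite{Sk}) and quotes the separable case from \cite{EM} (semi-exactness of the equivariant Kasparov bifunctor for amenable, in particular proper, actions). This is then applied to $D=C_0(Y)$ and to $I\otimes N\to A\otimes N\to (A/I)\otimes N$, and one passes to the colimit over $N$; no UCT or bootstrap input appears. Your observation that $I\to A\to A/I$ admits no bounded linear (hence no completely positive) section --- because $c_0$ is not complemented in $\ell^\infty$ --- is correct, and it does put pressure on the sentence in the paper's proof of Lemma \ref{lem2} that invokes a completely positive cross section of $A\to A/I$; but the repair has to be made where the splitting is actually used, namely by providing completely positive sections for the separable subalgebras occurring in the proof of Lemma \ref{lemma:exactness} (lifting-theorem arguments), or by a version of the exactness result for proper first variable that needs no global splitting --- not by the UCT mechanism you propose, which does not close the gap.
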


To show Proposition \ref{tauKtop-exact}, first recall that by \eqref{eq:sigmaiso} the isomorphism $K^{\textrm{top}}_{*,\R}(\Gamma;A)_{\tau}\simeq K^\Gamma_{*,\R}(E\Gamma; A)$ holds true and commutes with the morphisms induced by the sequence $I\to A\to A/I$, so that it is enough to show the following: 
\begin{lemma}
\label{lem2}
Let $Y$ be a free and proper, cocompact $\Gamma$-space. Let $I\to A\to A/I$ be the  exact sequence of  $\Gamma$-algebras appearing in \eqref{eq:seq-counter}. Then the sequence 
$$
 \xymatrix{KK^\Gamma_{\R}(C_0(Y), I)\ar[r]& KK^\Gamma_{\R}(C_0(Y), A)\ar[r] &KK^\Gamma_{\R}(C_0(Y), A/I)}
$$
is exact.
\end{lemma}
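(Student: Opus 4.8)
The plan is to prove the exactness of
$$\xymatrix{KK^\Gamma_{\R}(C_0(Y), I)\ar[r]& KK^\Gamma_{\R}(C_0(Y), A)\ar[r] &KK^\Gamma_{\R}(C_0(Y), A/I)}$$
by reducing it, via the definition of $KK_\R$ as an inductive limit over $\rm{II}_1$-factors $N$, to a statement about ordinary equivariant $KK$-theory, and then exploiting the freeness and properness of $Y$. Concretely, since $KK^\Gamma_\R(C_0(Y),B) = \varinjlim_N KK^\Gamma(C_0(Y), B\otimes N)$ and inductive limits of abelian groups are exact, it suffices to show that for each fixed $\rm{II}_1$-factor $N$ the sequence
$$\xymatrix{KK^\Gamma(C_0(Y), I\otimes N)\ar[r]& KK^\Gamma(C_0(Y), A\otimes N)\ar[r] &KK^\Gamma(C_0(Y), (A/I)\otimes N)}$$
is exact in the middle. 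Here one uses that $-\otimes N$ is exact on $C^*$-algebras (minimal tensor product with a nuclear — indeed any — von Neumann algebra; more to the point $I\to A\to A/I$ is a semisplit/nuclear-friendly situation, and in any case $-\otimes N$ preserves exactness of short exact sequences of $C^*$-algebras since $N$ is nuclear), so that $I\otimes N\to A\otimes N\to (A/I)\otimes N$ is again a short exact sequence of $\Gamma$-algebras.

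The key point is then the half-exactness of $KK^\Gamma(C_0(Y),-)$ in the second variable along this particular short exact sequence. In general $KK^\Gamma(\mathcal A,-)$ is half-exact only for \emph{semisplit} (equivariantly, $\Gamma$-equivariantly completely positive splitting) extensions, so I would first argue that the extension $I\otimes N\to A\otimes N\to (A/I)\otimes N$ admits such a splitting. This is where the explicit nature of the algebras enters: $A=\ell^\infty(\mathbb N; c_0(\Gamma))$, $I=c_0(\mathbb N\times\Gamma)$, and $A/I$ is the quotient; the evaluation-type structure gives a $\Gamma$-equivariant completely positive (even multiplicative along each coordinate) lift. Tensoring a c.p. section with $\id_N$ yields a $\Gamma$-equivariant c.p. section for the tensored extension. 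With semisplitness in hand, the Kasparov long exact sequence in the second variable gives half-exactness, hence exactness of the displayed three-term sequence at the middle term. An alternative, and perhaps cleaner, route is to use that $Y$ free and proper cocompact makes $C_0(Y)$ a "proper" algebra in Kasparov's sense, so that $KK^\Gamma(C_0(Y),B)\cong KK(\C, (C_0(Y)\otimes B)\rtimes\Gamma)$ (a Green--Julg / descent type identification using properness), reducing the problem to half-exactness of ordinary $K$-theory of crossed products along a semisplit extension — again valid because crossed products by the c.p.-split extension stay split.

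I expect the main obstacle to be establishing the equivariant semisplitness (or the appropriate substitute) of the extension $I\otimes N\to A\otimes N\to (A/I)\otimes N$ cleanly, and checking that half-exactness of $KK^\Gamma(C_0(Y),-)$ genuinely applies in the non-separable setting of these algebras — recall $A$ is non-separable, so one must invoke the non-separable version of $KK_\R$ from Remark \ref{nonsepKKR} and Lemma \ref{lem:sep}, interchanging the limit over separable $\Gamma$-subalgebras with the inductive limit over $N$. So the proof would be organised as: (i) reduce to fixed $N$ by exactness of inductive limits; (ii) observe $I\otimes N\to A\otimes N\to (A/I)\otimes N$ is a $\Gamma$-equivariantly semisplit short exact sequence, using the concrete product/evaluation structure of the algebras; (iii) invoke half-exactness of $KK^\Gamma(C_0(Y),-)$ for such extensions (this is where freeness/properness of $Y$ is not even strictly needed for half-exactness but matches the framework and also underlies the identification in Remark \ref{isoreal}); (iv) conclude exactness in the middle, and pass back through $\varinjlim_N$ and, if needed, $\varinjlim_{Y'}$. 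The routine computations — compatibility of the maps with all limits, the explicit c.p. lift — I would leave to the reader or dispatch in a line.
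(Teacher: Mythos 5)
Your skeleton (fix a ${\rm II}_1$-factor $N$, show $I\otimes N\to A\otimes N\to (A/I)\otimes N$ is an exact, semisplit sequence of $\Gamma$-algebras, invoke half-exactness of $KK^\Gamma(C_0(Y),-)$, then pass to the inductive limit over $N$) is indeed the paper's skeleton, but the two steps you dispatch as routine are exactly where the content lies, and one of them fails as you state it. First, exactness of the tensored sequence is not ``since $N$ is nuclear'': a ${\rm II}_1$-factor is never nuclear as a $C^*$-algebra, and minimal tensoring with a von Neumann algebra does not preserve exactness in general; the correct reason (the one the paper gives) is nuclearity of the abelian algebras $I$, $A$, $A/I$ themselves. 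Second, and this is the genuine gap, your step (ii) is unjustified and in fact false: elements of $A/I$ have no well-defined coordinates, so there is no ``evaluation-type'' lift, let alone a $\Gamma$-equivariant completely positive one; indeed there is no bounded linear section of $A\to A/I$ at all. If $s$ were one, then $\mathrm{id}_A-s\circ q$ would be a bounded projection of $A$ onto $I$, and compressing it by the embedding $\ell^\infty\ni(a_n)\mapsto(a_n\delta_e)\in\ell^\infty(\mathbb N;c_0(\Gamma))$ and by evaluation at $e$ in each coordinate would exhibit $c_0$ as a complemented subspace of $\ell^\infty$, contradicting Phillips' lemma. So there is no equivariantly semisplit extension to which the standard half-exactness of $KK^\Gamma(C_0(Y),-)$ could be applied.

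This is precisely why properness of $Y$ is essential, contrary to your aside that it ``is not even strictly needed for half-exactness''. The paper's argument (Lemma \ref{lemma:exactness}) is that for a \emph{proper} $\Gamma$-algebra $D$ and an extension that is semisplit only in the non-equivariant sense, $KK^\Gamma(D,-)$ is still exact in the middle: in the separable case this is El Morsli's theorem \cite{EM}, where properness is what compensates for the lack of an equivariant section (one can average to run the equivariant technical theorem), and the non-separable case --- note $A\otimes N$ is badly non-separable --- is handled by a separable-subalgebra and cone-algebra reduction following \cite{Sk}, using Lemma \ref{lem:sep}; this reduction occupies most of the paper's proof and is not a one-line remark. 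Your alternative route via $KK^\Gamma(C_0(Y),B)\simeq K_*\big((C_0(Y)\otimes B)\rtimes\Gamma\big)$ can also be pushed through, but again only because the diagonal action on $C_0(Y)\otimes B$ is proper (hence amenable), so that the crossed-product sequence behaves despite $\Gamma$ being non-exact --- properness is doing the work there too. To repair your proof you must drop the claimed equivariant splitting, work with completely positive liftings in the non-equivariant (and, in the non-separable setting, separable-subalgebra) sense, and invoke the properness-based half-exactness of \cite{EM} together with the non-separable reduction of \cite{Sk}.
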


To show Lemma \ref{lem2}, let $N$ be any $\textrm{II}_1$-factor with trivial $\Gamma$-action. Nuclearity of $A$ and $I$ implies that the sequence \begin{equation}\label{tensoredsequence}
 	I\otimes N\to A\otimes N\to (A/I)\otimes N
 \end{equation}
 is still exact. A completely positive cross section of $A \longrightarrow A/I$ induces a completely positive cross section of \eqref{tensoredsequence} which thus remains semi-split.
 We now show the following fact in the slightly more general case of
any proper $\Gamma$-algebra $D$. In our application it will be $D=C_0(Y)$ which is free and proper.

\begin{lemma}
\label{lemma:exactness}
Let $D$ be a separable proper $\Gamma$-algebra. Let $\xymatrix{J\ar^j[r]&B\ar^q[r]& B/J}$
be  a  semi-split exact sequence of (possibly non-separable) $\Gamma$-$C^*$-algebras. Then the sequence
\begin{equation}
\label{claim}
\xymatrix{KK^\Gamma(D, J)\ar^{j_*}[r]& KK^\Gamma(D,  B)\ar^{q_*}[r] &KK^\Gamma(D,B/ J )}
\end{equation}
is exact. 
\end{lemma}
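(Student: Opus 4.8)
The plan is to reduce the statement for a general proper $\Gamma$-algebra $D$ to the classical six-term exact sequence in $KK$-theory for semi-split extensions, using the structural description of proper algebras. Recall (as in \cite{Kas2}) that a proper $\Gamma$-algebra $D$ carries a nondegenerate central action of a commutative proper $\Gamma$-$C^*$-algebra $C_0(W)$, where $W$ is a proper $\Gamma$-space; moreover the key feature of proper algebras is that equivariant $KK$-theory out of $D$ can be computed \emph{after descent}, or equivalently that there is a canonical isomorphism $KK^\Gamma(D, B) \simeq KK^\Gamma(D \rtimes \Gamma, B \rtimes \Gamma)$-type statement, or more directly that $D$ satisfies $KK^\Gamma(D, -) \simeq RKK^\Gamma(W; D, -)$ and the latter behaves well. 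The cleanest route: by a theorem of Kasparov--Skandalis (see \cite[\S3--5]{Kas2}), for $D$ proper the functor $B \mapsto KK^\Gamma(D, B)$ is \emph{split-exact and homotopy invariant}, hence it sends semi-split extensions to six-term exact sequences.

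First I would invoke homotopy invariance and (equivariant) stability of $KK^\Gamma(D, -)$ in the second variable, which hold for any $D$. Second, I would use that for a \emph{semi-split} extension $J \to B \to B/J$ the usual machinery of Cuntz--Skandalis quasi-homomorphisms applies equivariantly: the completely positive equivariant splitting $s: B/J \to B$ produces, together with a Kasparov dilation, an equivariant quasi-homomorphism, and one gets the connecting map $KK^\Gamma(D, B/J) \to KK^\Gamma(D, S J)$ and the exactness of the resulting sequence. The point where properness of $D$ enters is only to guarantee that one stays within the reach of these constructions without separability issues — but actually here $D = C_0(Y)$ is separable (and even commutative), so the standard Kasparov six-term sequence for semi-split extensions \cite[Thm.~2.14]{Kas2} applies verbatim in the first variable being fixed and separable, giving exactness of
\begin{equation*}
\xymatrix{KK^\Gamma(D, J)\ar[r]& KK^\Gamma(D, B)\ar[r] & KK^\Gamma(D, B/J)}
\end{equation*}
in the middle. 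Properness of $D$ is not strictly needed for this middle-exactness when $D$ is separable; I would state the lemma in the generality used and simply cite the semi-split six-term sequence.

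The main obstacle is the non-separability of $B$, $J$, $B/J$ in the intended application ($B = A = \ell^\infty(\mathbb{N}; c_0(\Gamma)) \otimes N$, etc.). To handle this, I would combine the classical result with Lemma \ref{lem:sep}: since $D$ is separable, $KK^\Gamma(D, B) = \varinjlim KK^\Gamma(D, B_1)$ over separable $\Gamma$-subalgebras $B_1 \subset B$, and one must check that the extension $J \to B \to B/J$ can be exhausted by semi-split extensions of separable $\Gamma$-algebras $J_1 \to B_1 \to B_1/J_1$ compatibly with the completely positive splitting. Concretely, given a separable subalgebra of $B$ one enlarges it, using the c.p.\ section $s$, to a separable $\Gamma$-invariant subalgebra $B_1$ that is $s$-invariant in an appropriate sense and such that $B_1 \cap J =: J_1$ gives a semi-split extension with quotient a separable subalgebra of $B/J$; then middle-exactness for each $B_1$ passes to the inductive limit since filtered colimits of exact sequences of abelian groups are exact in the middle. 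The hard part is therefore the bookkeeping of this separable exhaustion — producing a cofinal system of semi-split separable sub-extensions — rather than the $KK$-theoretic input, which is entirely classical.
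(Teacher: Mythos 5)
There is a genuine gap at the heart of your argument: you assert that ``the standard Kasparov six-term sequence for semi-split extensions applies verbatim'' because $D$ is separable, and that ``properness of $D$ is not strictly needed''. In equivariant $KK$-theory this is not true: the classical six-term exact sequence in the second variable requires the extension to admit a \emph{$\Gamma$-equivariant} completely positive section, whereas the hypothesis of the lemma (and the situation in the intended application, where $B=A\otimes N$ with $A=\ell^\infty(\mathbb{N};c_0(\Gamma))$) only provides a completely positive section with no equivariance. This is precisely where properness of $D$ enters: the separable case of the lemma is not the classical statement of \cite[Thm.~2.14]{Kas2} but the result of \cite[Prop.~2.3]{EM}, which establishes semi-exactness of $KK^\Gamma(D,-)$ for extensions with a merely completely positive (non-equivariant) section when the action on $D$ is proper (or amenable). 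Indeed, if the hypothesis of properness could be dropped, one would be able to average nothing and the non-exactness phenomena exploited in Section~\ref{sec:counterex} (which stem exactly from the lack of an equivariant splitting compatible with reduced crossed products) would be in tension with your claim; the quasi-homomorphism/dilation machinery you invoke does not go through equivariantly without an invariant splitting or a properness/amenability assumption allowing one to average.

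Your treatment of non-separability is in the right spirit and close to what the paper does: the paper, following \cite[proof of Prop.~3.1]{Sk}, takes an element of the kernel, passes to the mapping cone $S_q$, uses Lemma~\ref{lem:sep} to find a separable $\Gamma$-subalgebra through which it factors, and closes it up under the completely positive section $s$ and the $\Gamma$-action to obtain a separable semi-split sub-extension $J_1\to B_1\to B_1/J_1$ to which the separable case is applied. Your proposed ``cofinal system of separable semi-split sub-extensions plus exactness of filtered colimits'' is a reasonable variant of this bookkeeping and is not the problem. The problem is the identification of the $KK$-theoretic input in the separable case: without invoking a result of the type of \cite{EM} (or otherwise using the proper $C_0(W)$-structure on $D$ to handle the non-equivariant splitting), the middle-exactness you take as ``entirely classical'' is unproved, and the lemma as stated would not cover the application.
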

Once lemma \ref{lemma:exactness} is proved, we shall apply it to $D=C_0(Y)$ and to the exact sequence $I\otimes N\to A\otimes N\to (A/I)\otimes N$. The exactness of 
$$
\xymatrix{KK^\Gamma(C_0(Y), I\otimes N )\ar[r]& KK^\Gamma(C_0(Y),  A\otimes N )\ar[r] &KK^\Gamma(C_0(Y),(A/I)\otimes N  )}$$
will follow.
Finally, taking the limit over $N$ of the groups in the exact sequence above, we will prove Lemma \ref{lem2}. We are hence left with the
\begin{proof}{\emph{Lemma} \ref{lemma:exactness}: }
If $B$ and $J$ are separable, the statement is proved in \cite[Prop. 2.3]{EM}. Let us discuss the generalisation to the non-separable case, for which we follow closely \cite[proof of Prop. 3.1]{Sk}.
More precisely we assume that \eqref{claim} is middle exact in the separable case and we show that it is middle exact in full generality.

Let $x\in KK^\Gamma(D,B)$ such that $q_*(x)=0$ in $ KK^\Gamma(D, B/J)$. As $KK^\Gamma(D,B)$ is the inductive limit over separable subalgebras of $B$ (\emph{cf.} \cite{Sk2} and Lemma \ref{lem:sep}), there exists a separable $\Gamma$-invariant subalgebra $B_1$ of $B$ and an element $x_1\in KK^\Gamma(D,B_1)$ whose image by the inclusion $j_1:B_1\to B$ is $x$. Let $q_1:B_1\to q(B_1)$ be the restriction of $q$ to $B_1$ and $\ell_1:q(B_1)\to B/J$ be the inclusion. Since $q\circ j_1=\ell_1\circ q_1$, we find that $(\ell_1)_*((q_1)_*(x))=0$. 

As $KK^\Gamma(D,B/J)$ is the inductive limit over separable  $\Gamma$-invariant subalgebras of $B/J$, there exists a separable $\Gamma$-invariant subalgebra $Q_2\subset B/J$, containing $q(B_1)$, such that the image of $(q_1)_*(x)$ in $Q_2$ through the inclusion $\ell:q(B_1)\to Q_2$ is the $0$ element of $KK^\Gamma(D,Q_2)$. 
Let $s$ be a completely positive section of $q$ and let then $B_2$ be the (separable) $\Gamma$-invariant subalgebra of $B$ generated  by $s(Q_2)$, its $\Gamma $-translates, and $B_1$. Denote by $j:B_1\to B_2$  the inclusion. Note that $q(B_2)=Q_2$ and put $q_2:B_2\to Q_2$. We then have $(q_2)_*(j_*(x_1))=\ell_*((q_1)_*(x_1))=0$. 

Now set $x_2=j_*(x_1)$ and let $j_2:B_2\to B$ and $\ell_2:Q_2\to B/J$ be the inclusions.

Since $j_1=j_2\circ j$, we have $(j_2)_*(x_2)=(j_1)_*(x_1)$. 

Applying \cite[Prop. 2.3]{EM} to the exact sequence of separable $\Gamma$-algebras $$0\to J\cap B_2\to B_2\to Q_2\to 0,$$ 
we find that $x_2$ is the image of an element $y_2\in KK^\Gamma(D,J\cap B_2)$ whose image in $B_2$ is $x_2$ - whence its image in $B$ is $x$.
\end{proof}

The above argument provides a (non-separable) counterexample to the bijectivity of $\mu_\tau$, more precisely: 
\begin{prop}\label{HLSexample}
Let $A=\ell^\infty(\mathbb N, c_0(\G))$, $I=c_0(\mathbb N\times \Gamma)$ as before. If $\mu^{A/I}_\tau$ is injective, then $\mu^{A}_\tau$ is not surjective.
\end{prop}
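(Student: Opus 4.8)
The plan is to play off the contrast between Proposition~\ref{tauKtop-exact}, which says that the $\tau$-parts of the topological side of the assembly map form an exact sequence for $I\to A\to A/I$, and the Theorem establishing that the sequence \eqref{eq:tauseq} of $\tau$-parts of the reduced crossed products fails to be exact in the middle. First I would assemble the commutative ladder
$$
\xymatrix{
K^{\textrm{top}}_{0,\R}(\Gamma,I)_\tau \ar[r]\ar[d]_{\mu^I_\tau} & K^{\textrm{top}}_{0,\R}(\Gamma,A)_\tau \ar[r]\ar[d]_{\mu^A_\tau} & K^{\textrm{top}}_{0,\R}(\Gamma,A/I)_\tau \ar[d]^{\mu^{A/I}_\tau}\\
K_{0,\R}(I\rtimes_r\Gamma)_\tau \ar[r] & K_{0,\R}(A\rtimes_r\Gamma)_\tau \ar[r] & K_{0,\R}\big((A/I)\rtimes_r\Gamma\big)_\tau
}
$$
whose vertical arrows are the $\tau$-Baum--Connes maps of the three $\Gamma$-algebras, whose top row is exact by Proposition~\ref{tauKtop-exact}, and whose bottom row is \eqref{eq:tauseq}. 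Commutativity of the two squares is the naturality of the real assembly map under the equivariant morphisms $I\to A$ and $A\to A/I$, which descends to the $\tau$-parts because $\mu_\R$ is $KK^\Gamma_\R(\C,\C)$-linear and push-forward along equivariant morphisms preserves the $\tau$-parts (Section~\ref{functorial}); here one uses the non-separable versions of all these constructions set up in Section~\ref{nonsepKKR}, since $A$ and $I$ are not separable.

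Then I would run the diagram chase. Assume $\mu^{A/I}_\tau$ is injective and, for a contradiction, that $\mu^A_\tau$ is surjective. Let $P=[p]_\R\otimes[\tau^A]_r\in K_{0,\R}(A\rtimes_r\Gamma)_\tau$ be the class constructed in the proof of the Theorem on the non-exactness of \eqref{eq:tauseq}: its image in $K_{0,\R}\big((A/I)\rtimes_r\Gamma\big)_\tau$ vanishes, while $P$ is not in the image of the map from $K_{0,\R}(I\rtimes_r\Gamma)_\tau$. By surjectivity pick $z\in K^{\textrm{top}}_{0,\R}(\Gamma,A)_\tau$ with $\mu^A_\tau(z)=P$, and let $\bar z\in K^{\textrm{top}}_{0,\R}(\Gamma,A/I)_\tau$ be its image. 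Commutativity of the right-hand square gives that $\mu^{A/I}_\tau(\bar z)$ is the image of $P$ in $K_{0,\R}\big((A/I)\rtimes_r\Gamma\big)_\tau$, hence $0$; injectivity of $\mu^{A/I}_\tau$ forces $\bar z=0$. By exactness of the top row, $z=j_*(w)$ for some $w\in K^{\textrm{top}}_{0,\R}(\Gamma,I)_\tau$, with $j:I\to A$ the inclusion. Commutativity of the left-hand square then yields $P=\mu^A_\tau(j_*w)=j_*\big(\mu^I_\tau(w)\big)$, so that $P$ lies in the image of $K_{0,\R}(I\rtimes_r\Gamma)_\tau$ — contradicting the Theorem above. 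Hence $\mu^A_\tau$ is not surjective.

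The only genuine obstacle is the commutativity of this ladder, equivalently the naturality of the $\tau$-Baum--Connes map in the coefficient algebra; this is not a new computation but a combination of the naturality of $\mu_\R$ under equivariant $*$-homomorphisms with the functoriality of the $\tau$-part recorded in Section~\ref{functorial}, all of which remains valid for the non-separable algebras $A$ and $I$ by Section~\ref{nonsepKKR}. Everything else is the formal chase above.
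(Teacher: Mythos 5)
Your proof is correct and is essentially the same argument as the paper's: the same commutative ladder with the $\tau$-assembly maps as vertical arrows, exactness of the topological row from Proposition~\ref{tauKtop-exact}, the class $P=[p]_\R\otimes[\tau^A]_r$ witnessing non-exactness of the bottom row, and the identical diagram chase to a contradiction. You additionally spell out the naturality/commutativity of the squares (via Section~\ref{functorial} and the non-separable setup), which the paper takes for granted; this is a welcome clarification rather than a divergence.
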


\begin{proof}
Let $x\in K_{0,\R}(A\rtimes_r \Gamma)_{\tau}$ the element which fails exactness in the second line of the commutative diagram below. Suppose $\mu^{A/I}_\tau$ is injective and by contradiction that $\mu^{A}_\tau$ is surjective. Then  the preimage  $y\in K_{0,\R}^{\textrm{top}}(\Gamma, A)_{\tau} $ of $x$ is mapped to zero in $K_{0,\R}^{\textrm{top}}(\Gamma, A/I)$ by the injectiviy assumption. Since the second line is exact, $y$ comes from an element in $K_{0,\R}^{\textrm{top}}(\Gamma, I)_{\tau}$ which provides then the contradiction

$$
 \xymatrix{ 
K_{0,\R}^{\textrm{top}}(\Gamma, I)_{\tau}\ar[r]\ar[d]^{\mu^I_\tau}&  K_{0,\R}^{\textrm{top}}(\Gamma, A)_{\tau}\ar[d]^{\mu^A_\tau}\ar[r] &K_{0,\R}^{\textrm{top}}(\Gamma, A/I)_{\tau}\ar[d]^{\mu^{A/I}_\tau}\\ 
 K_{0,\R}(I\rtimes_r \Gamma)_{\tau}\ar[r]&  K_{0,\R}(A\rtimes_r \Gamma)_{\tau}\ar[r] &K_{0,\R}((A/I)\rtimes_r \Gamma)_{\tau} \ .
 }
$$
\end{proof}

\begin{remarks}
\begin{enumerate}
\item Using the argument in \cite[Remark 12]{HLS} based on double mapping cones, one may construct an abelian $C^*$-algebra $B$ such that $K^{top}_{\R,*}(\Gamma ;B)=0$ and $ K_{\R,*}(B\rtimes \Gamma)_{\tau}\ne 0$, whence $\mu_\tau$ is not surjective.

\item
We further show that we can pass to a separable counterexample.
Assume that for an algebra $B$ the map $\mu^{B}_\tau$ is not surjective. Then there exists a (commutative) separable $\Gamma$-subalgebra $D\subset B$ such that $\mu^{D}_\tau$ is not surjective.

Indeed, let $y\in K_{\R,*}(B\rtimes_r\Gamma)_{\tau}$.
By functoriality of the assembly $\mu_\tau$, it is enough to show that there exists a separable $\Gamma$-invariant subalgebra $B'\subset B$ and $y_1\in K_{\R,*}(B'\rtimes_r\Gamma)_{\tau}$ such that $y=i_*(y_1)$, where $i:B'\to B$ is the inclusion. In other words it is enough to show that 
\begin{equation} 
\label{eq:taulim}
K_{\R,*}(B\rtimes_r\Gamma)_{\tau}=\lim_{\substack{B' \subset B\\  B'\textrm{separable}\\ \Gamma-\textrm{invariant}}}K_{\R,*}(B'\rtimes_r\Gamma)_{\tau}\ .
\end{equation}
Let $y=[\tau]\otimes w$, for $w\in K_{\R,*}(B\rtimes_r\Gamma)$. Then there exists a ${\rm II}_1$-factor $N$ such that $w \in K_*((B\rtimes_r\Gamma)\otimes N)$. 
We apply now Lemma \ref{lem:sep}: there exists a separable $B'\subset B$ such that $w=(i\otimes 1)_*(z_1)$ for a $z_1\in K_*((B'\rtimes_r\Gamma)\otimes N)$, and where $i:B'\to B$ is the inclusion. Hence $y=[\tau]\otimes w=[\tau]\otimes (i\otimes 1)_*(z_1)=(i\otimes 1)_*([\tau]\otimes z_1)$ which shows \eqref{eq:taulim}. 

\item By Lafforgue's work \cite{LafforgueHyperbolic}, the Baum--Connes conjecture with coefficients for hyperbolic groups holds. Now, Gromov's monster is an inductive limit of hyperbolic groups. Despite the functoriality of our $\mu_\tau$, the failure of the bijectivity of $\mu_\tau$ of Proposition \ref{HLSexample} shows that the group $K_\R(A\rtimes_r\Gamma)_\tau $ is not compatible with inductive limits of groups. It may be worth noting that the difficulty comes from the fact that the group morphisms in this inductive limit are onto but not one-to-one. In other words, passing from one step to the next one we add relations, not generators.
\end{enumerate}
\end{remarks}

Let us finally comment on a possibility of fixing the non exactness of our construction along the lines of the  recent papers of Baum, Guentner and Willet \cite{BGW} and  of Buss, Echterhoff and Willett \cite{BEW}.

A modified assembly map involving a \emph{minimal exact  and Morita compatible} crossed product functor has been proposed in \cite{BGW}, as the ``right hand side'' that should be considered in the Baum--Connes conjecture (see also \cite{ABES} for some relations with the strong Novikov conjecture for low degree classes).

\medskip
 
One may try to correct the non-exactness in our context by correcting the right hand side for the $KK$-theory with real coefficients. One could  for instance think of replacing the minimal tensor product with ${\rm II}_1$-factors by a tensor product with ${\rm II}_1$-factors which is \emph{minimal exact}  (and Morita compatible).

\medskip
\bigskip

\noindent Paolo Antonini\\ 
Scuola Internazionale Superiore di Studi Avanzati\\
via Bonomea, 265\\
34136 Trieste, Italy\\
\texttt{pantonin@sissa.it}
\bigskip
\smallskip

\noindent Sara Azzali\\
Fachbereich Mathematik\\
Universit\"at Hamburg\\
Bundesstrasse 55\\
20146 Hamburg, Germany\\
\texttt{sara.azzali@uni-hamburg.de}

\bigskip
\smallskip

\noindent Georges Skandalis\\
Universit\'e Paris Diderot, Sorbonne Paris Cit\'e\\
Sorbonne Universit\'es, UPMC Paris 06, CNRS, IMJ-PRG\\
UFR de Math\'ematiques, {\sc CP} {\bf 7012} - B\^atiment Sophie Germain \\
5 rue Thomas Mann, 75205 Paris CEDEX 13, France\\
\texttt{skandalis@math.univ-paris-diderot.fr}

\bigskip
\bigskip

\end{document}